\newtheorem{theorem}{Theorem}[section]
\newdefinition{rmk}{Remark}[section]
\begin{document}

\begin{frontmatter}

\title{Linear, decoupled, second-order and structure-preserving scheme for Carreau fluid equations coupled with steric Poisson-Nernst-Planck model}%

\author[1]{Wenxing Zhu}
\author[2]{Mingyang Pan}
\author[1]{Dongdong He\corref{cor1}}
\cortext[cor1]{Corresponding author.}
\ead{hedongdong@cuhk.edu.cn}

\address[1]{School of Science and Engineering, The Chinese University of Hong Kong, Shenzhen, Shenzhen, Guangdong, 518172, P.R.China}
\address[2]{School of Science, Hebei University of Technology, Tianjin, 300401, P.R.China}

\begin{abstract}
In this paper, to study ionic steric effects, we present a linear, decoupled, second-order accurate in time and structure-preserving scheme with finite element approximations for Carreau fluid equations coupled with steric Poisson-Nernst-Planck (SPNP) model. The logarithmic transformation for the ion concentration is used to preserve positivity property.
To deal with the nonlinear coupling terms in fluid equation, a nonlocal auxiliary variable with respect to the free energy of SPNP equations and its associated ordinary differential equation are introduced. The obtained system is equivalent to the original system. The fully discrete scheme is proved to be mass conservative, positivity-preserving for ion concentration and energy dissipative at discrete level. 
Some numerical simulations are provided to demonstrate its stability and accuracy. Moreover, the ionic steric effects are numerically investigated.

\end{abstract}

\begin{keyword}
Electrokinetic flow \sep steric effects  \sep Finite element method  \sep Fully decoupled \sep Second-order 
\end{keyword}
\end{frontmatter}

\section{Introduction}
Electrokinetic flow describes the motion of electrically induced fluids. The ions are transported by advection due to fluid flow, migration under an electric potential which is generated by the distribution of charged ions and diffusion driven by concentration gradients. Conversely, the flow of fluid is driven by the electric field generated by the ions. 
The electrokinetic phenomenon has received much attention recently due to its widely promising applications including biomedical lab-on-a-chip devices~\cite{lee2000surface}, capillary electrophoresis~\cite{ghosal2006electrokinetic},  battery and fuel cell technology~\cite{nielsen2014concentration,choi2006advanced}, desalination of water~\cite{nikonenko2014desalination}, microfluidic systems~\cite{tsai2005numerical,hu2006electrokinetically}.


Electrokinetic phenomenon is usually described by the Poisson-Nernst-Planck (PNP) equations for ion transport coupled to the Navier-Stokes (NS) equations for the Newtionian fluid flow. The conventional PNP model~\cite{gajewski1986basic,brezzi1989numerical,eisenberg2007poisson,lu2010poisson,wei2012variational} treats ions as point charges and neglects the steric effects of ions arising from their finite size. While valid for bulk or diluted solvents, this assumption could be problematic in a narrow channel pore with crowded ionic populations. Within the PNP framework, there has been a growing interest in incorporating steric effects~\cite{Horng2012PNPEW,Kili2006StericEI,Lu2011PoissonNernstPlanckEF,Siddiqua2017AMP,lin_eisenberg_2013,Ding2019PositivityPF}. Currently, most studies on electrokinetic flow mainly rely on the Newtonian fluid flow~\cite{bauer2012stabilized,schmuck2009analysis,metti2016energetically,linga2020transient,pan2024linear,wang2019quasi,wang2016generalized,Xu_2014,li2024error}, which fails to capture the shear-dependent viscosity in complex fluids. To advance modeling of electrokinetic phenomenon in rheologically complex media, we aim to investigate the dynamics of ions with finite size in non-Newtonian fluids with shear-dependent viscosity. 

In this paper, we assume $\Omega$ to be a rectangular bounded domain in $\mathbb{R}^2$ with a Lipschitz continuous boundary $\partial\Omega$. The magnetic field is negligible, since the dynamic electric current is small. The fluid is assumed incompressible. Under these assumptions, the SPNP system governs the dynamics of ions with steric effects under the effect of electrical field, while the mass and momentum conservation equations describe the non-Newtonian fluid behavior. Therefore, the governing equations of Carreau fluid equations coupled with SPNP model may be written as
\begin{subequations}\label{model}
\begin{align}
&\rho\partial_t\mathbf{u} + \rho(\mathbf{u}\cdot\nabla)\mathbf{u}  - \nabla\cdot\bm{\tau} + \nabla p = -\sum\limits_{i}z_ic_i\nabla V,\label{NS} \\
&\nabla\cdot \mathbf{u}=0,\label{incompressible} \\
&\partial_tc_i + \mathbf{u}\cdot\nabla c_i = D_i\nabla\cdot(c_i\nabla g_i ),\label{concentration} \\
&g_i = \log \frac{c_i}{c_0} + \frac{z_i e}{k_BT} V + \frac{e}{k_BT}\sum\limits_{j}\omega_{ij} c_j,\label{chemicalpotential}\\
&-\nabla\cdot(\epsilon\nabla V) = \sum\limits_{i}z_ic_i,\label{Possion}
\end{align}
\end{subequations}
where $c_i$ is the ion concentration of the ith species with $i\in 1,...,N$, $g_i$ is the chemical potential with respect to $c_i$, $z_i$ is the ionic valency, $D_i$ is the diffusion constant, $W=(\omega_{ij})_{N\times N}$ is the coefficient matrix for steric interactions, $\omega_{ij}$ is a nonnegative constant depending on ion radii, $k_B$ is the Boltzmann's constant, $T$ is the absolute temperature, $e$ is the unit charge, $V$ is the electrostatic potential, $\epsilon$ is the electric permittivity, $\rho$ is the fluid density, $\mathbf{u}$ is fluid velocity, $p$ is the pressure, $\bm{\tau}=2\mu_p\mathbb{D}\mathbf{u}$ is the shear stress tensor, $\mathbb{D}\mathbf{u}=\frac{1}{2}[\nabla \mathbf{u} + (\nabla \mathbf{u})^T]$ is the deformation tensor, and the apparent viscosity $\mu_p$ is assumed to follow the Carreau model~\cite{carreau1972rheological,animasaun2017numerical,waqas2017numerical}:
\begin{align}\label{Carreau}
&\mu_p = \mu_{\infty} +(\mu_0 - \mu_{\infty})\Big( 1 + \lambda_1^2(2\mathbb{D}\mathbf{u}:\mathbb{D}\mathbf{u})\Big)^{\frac{k-1}{2}}, 
\end{align}
$\mu_0$ is the viscosity at zero shear rate, $\mu_\infty$ $(\mu_0 >\mu_\infty> 0)$ is the viscosity at infinite shear rate, $\lambda_1$ is the relaxation time constant, $k$ is the power index. It can be seen from \eqref{Carreau} that at a low shear rate, Carreau fluid behaves as a Newtonian fluid and at a high shear rate as a power law~\cite{sivakumar2006effect, zhu2024decoupled} fluid. The Carreau model is mostly used for food, beverages and also blood flow applications. In this system \eqref{model}, we assume that $W$ is positive definite and $D_i=D$.




The classical Navier–Stokes–Poisson–Nernst–Planck (NSPNP) system can hardly be solved analytically due to the coupling of the velocity, the electric potential and ion concentrations. For the two separate subproblems, there are many effective methods available. The projection method~\cite{guermond2000projection,rannacher2006chorin,guermond2006overview,guermond2009splitting} can decouple the velocity and pressure for NS equations. For the PNP equations, many efficient approaches are discussed in~\cite{prohl2009convergent,flavell2014conservative,gao2017linearized,he2016energy,shen2018scalar,he2019positivity,hu2020fully,shen2021unconditionally}. 
For NSPNP system, some numerical studies have been devoted to solving this system. Prohl et al.~\cite{Prohl2010CONVERGENTFE} constructed an implicit and nonlinear scheme which preserves the non-negativity of the ionic concentrations. They also considered a projection method without non-negativity preserving.
Metti et al.~\cite{metti2016energetically} also proposed a nonlinear, first-order and unconditionally energy stable time-stepping scheme with a logarithmic transformation of the ionic concentrations.
He et al.~\cite{he2018mixed} proposed nonlinear schemes, which preserves the positivity and satisfies energy dissipation under certain conditions and specific spatial discretization.


Compared with classical NSPNP system, the new model \eqref{model} considers non-Newtonian viscosity and ionic steric interactions. Besides the Coulomb force term coupling the electric potential and fluid velocity and the convection term coupling ionic concentration and fluid velocity, 
nonlinearity is further introduced by steric effects from the last term in \eqref{chemicalpotential} and nonlinear viscosity dependent on shear rate in \eqref{Carreau}. Developing efficient and structure-preserving algorithm is very challenging for this highly nonlinear coupled partial differential equation system \eqref{model}. This work is inspired by scalar auxiliary variable (SAV) approach to construct the structure-preserving scheme, which has received much attention recently in ~\cite{ZHOU2023108763,shen2019new,shen2018scalar,shen2021unconditionally}. Also the fully decoupled scheme can be achieved by combining with splitting method. Based on this idea, we introduce a suitable auxiliary variable and design an ordinary differential equation to deal with the nonlinear terms involved in the equations. Meanwhile, the projection method is employed to treat the fluid equations to decouple the velocity and pressure. Using logarithm transformation for the ion concentrations, we construct a linear, decoupled and second-order accurate in time scheme for the Carreau fluid equations coupled with steric PNP system which satisfies positivity preserving, mass conservative, and energy dissipative properties.

The remainder of this paper is organized as follows. In section 2, we present the dimensionless form of system \eqref{model} and reformulate it into equivalent system based on auxiliary variable approach. The energy dissipation law of the modified system is derived.
In section 3, a second-order accurate in time, linear, decoupled, and structure-preserving scheme for the reformulated system is constructed. Mass conservation, positivity for ion concentration and energy stability of the fully discrete scheme are proved. The implementation details of the proposed scheme is also presented. 
In section 4, some numerical examples are given to validate the theoretical analyses and the effctiveness of the proposed methods. 
Finally, conclusions of the present study are given in section 5.
\section{Carreau fluid equations coupled with steric PNP model}
We first introduce the governing equations in a nondimensional form, and then present three important physical properties. 

\subsection{Non-dimensionalisation}
To get a nondimensional formulation, we introduce the following dimensionless variables:
\begin{align*}
&\tilde{\bm{x}} = \frac{\bm{x}}{\hat{l}}, \quad \tilde{\mathbf{u}} = \frac{\mathbf{u}}{\hat{u}},\quad \tilde{t} = \frac{t}{\hat{l}/\hat{{u}}},\quad \tilde{c}_i = \frac{c_i}{c_0},\quad \tilde{\mu}_p=\frac{\mu_p}{\hat{\mu}},\quad \tilde{\mu}_0=\frac{\mu_0}{\hat{\mu}}\\
&\tilde{V} =\frac{V}{k_BT/e}, \quad\tilde{p} = \frac{p}{\rho \hat{{u}}^2}, \quad \tilde{\omega}_{ij}=\frac{\omega_{ij}c_0e}{k_BT},\quad \tilde{\mu}_{\infty}=\frac{\mu_{\infty}}{\hat{\mu}}, \quad\tilde{\lambda}_1=\lambda_1\frac{\hat{u}}{\hat{l}}.
\end{align*}
For clarity, we omit all the tildes of the dimensionless variables. Then the govern equations \eqref{model} in dimensionless form become:
\begin{subequations}\label{nondimensional-model}
\begin{align}
&\partial_t\mathbf{u} + (\mathbf{u}\cdot\nabla)\mathbf{u}  - \frac{1}{Re} \nabla\cdot(2\mu_p\mathbb{D}\mathbf{u}) + \nabla p = -Co\sum\limits_{i}z_ic_i\nabla V,\label{non-NS} \\
&\nabla\cdot \mathbf{u}=0,\label{non-incompressible} \\
&\partial_tc_i + \nabla\cdot (\mathbf{u}c_i) = \frac{1}{Pe}\nabla\cdot(c_i\nabla g_i),\label{non-concentration} \\
&g_i=\log c_i + z_iV + \sum\limits_{j}\omega_{ij}c_j,\label{non-potential}\\
&-\lambda\Delta V = \sum\limits_{i}z_ic_i,\label{non-Possion}\\
&\mu_p = \mu_{\infty} +(\mu_0 - \mu_{\infty})\Big( 1 + \lambda_1^2(2\mathbb{D}\mathbf{u}:\mathbb{D}\mathbf{u})\Big)^{\frac{k-1}{2}},
\end{align}
\end{subequations}
where the nondimensional numbers are defined as follows
\begin{align*}
Re=\frac{ \rho\hat{u} \hat{l}}{\hat{\mu}},\quad Co=\frac{c_0k_BT}{\rho \hat{u}^2e},\quad Pe=\frac{\hat{l}\hat{u}}{D},\quad \lambda=\frac{\epsilon k_BT}{\hat{l}^2c_0e}.
\end{align*}
Here, $Re$ is the Reynolds number, $Co$ is the Coulomb-driven number, $Pe$ is the P\'{e}lect number, and $\lambda$ is the ratio of Debye length to the characteristic length. The initial conditions and boundary conditions are given by
\begin{align}
&c_i|_{t=0}=c_{i0}, \quad -\lambda\Delta V|_{t=0} = \sum\limits_{i}z_ic_{i0}, \quad \bm{u}|_{t=0}=\bm{u}_0,\label{eq:IC}\\
&\frac{\partial c_i}{\partial \mathbf{n}}|_{\partial\Omega}=0,\quad \frac{\partial V}{\partial \mathbf{n}}|_{\partial\Omega}=0,  \quad \bm{u}|_{\partial\Omega}=\bm{0},\label{BC}
\end{align}
where $\mathbf{n}$ is the unit outward normal on the boundary $\partial\Omega$.
Note that the electrostatic potential $V$ and pressure $p$ are determined up to a constant, we select unique solutions $V$, $p$ by requiring
\begin{align*}
\int_{\Omega} V d\mathbf{x}=0,\quad \int_{\Omega} p d\mathbf{x}=0.
\end{align*}
The total free energy of \eqref{nondimensional-model} is defined as:
\begin{align}\label{Energy}
E = E_{\mathbf{u}} + E_{V} + E_{ent}+ E_{ster},
\end{align}
where kinetic energy $E_{\mathbf{u}}$, electric field energy $E_V$, entropic contribution $E_{ent}$, and steric interaction energy $E_{ster}$ are given by
\begin{align*}
&E_{\mathbf{u}} = \int_{\Omega} \frac{1}{2}|\mathbf{u}|^2 d\mathbf{x},\\
&E_{V} = \int_{\Omega} \frac{1}{2}\lambda Co|\nabla V|^2 d\mathbf{x},\\
&E_{ent} = \int_{\Omega} Co\sum\limits_{i}c_i(\log c_i-1) d\mathbf{x},\\
&E_{ster} = \int_{\Omega} \frac{1}{2}Co\sum\limits_{i}\sum\limits_{j}\omega_{ij}c_ic_j d\mathbf{x}.
\end{align*}

\subsection{Basic properties}

We now present a few physical properties of the system \eqref{nondimensional-model}, which play important role in designing numerical schemes.
\begin{enumerate}
  \item Positivity: Suppose that $c_{i}(\mathbf{x},0)>0, \forall \mathbf{x}\in\Omega$, then
  \begin{align*}
  c_{i}(\mathbf{x},t)>0, \quad \forall t>0,\mathbf{x}\in\Omega.
  \end{align*}
  The term $\log c_i$ in \eqref{non-concentration} must require $c_i$ is always positive.
  \item Mass conservation:
  \begin{align*}
  \int_{\Omega} c_{i}(\mathbf{x},t) d\mathbf{x} = \int_{\Omega}  c_{i}(\mathbf{x},0) d\mathbf{x}, \quad \forall t>0.
  \end{align*}
  The mass conservation can be obtained by integrating \eqref{non-concentration} over $\Omega$, using the integration by parts and applying  the boundary conditions \eqref{BC}.
  \item Energy dissipation:
  \begin{align*}
\frac{dE}{dt}\leq 0.
  \end{align*}
The energy dissipation law for the system \eqref{nondimensional-model} can be obtained by the following process. Taking the $L^2$ inner product of \eqref{non-NS} with $\mathbf{u}$, using \eqref{non-incompressible} and integration by parts can arrive at
\begin{equation}\label{NS1}
\frac{d}{dt}\int_{\Omega}\frac{1}{2}|\mathbf{u}|^2d\mathbf{x} = ( \partial_t\mathbf{u},\mathbf{u}) = -\frac{1}{Re}\|\sqrt{2\mu_p}\mathbb{D}\mathbf{u}\|_F^2 - Co\left(\sum\limits_{i}z_ic_i\nabla V,\mathbf{u}\right),
\end{equation}
where $\|\cdot\|_F$ is Frobenius norm. Differentiating \eqref{non-Possion} with respect to time $t$, taking the $L^2$ inner product with $CoV$ and integrating by parts, we obtain
\begin{align}\label{Possion1}
\frac{d}{dt}\int_{\Omega} \frac{Co}{2}\lambda|\nabla V|^2 d\mathbf{x}= 
-Co\Big((\nabla\cdot(\lambda\nabla V))_t,V\Big) = Co(\sum\limits_{i}z_i\partial_tc_i,V).
\end{align}
Taking the $L^2$ inner product of \eqref{non-concentration} with $Cog_i$, using \eqref{non-potential}, \eqref{non-incompressible} and integration by parts, taking the summation for $i$ to get
\begin{align}\label{concentration1}
&\frac{d}{dt}\left( \int_{\Omega}\sum\limits_{i}Coc_i(\log c_i-1)d\mathbf{x} + \int_{\Omega}\frac{Co}{2}\sum\limits_{i}\sum\limits_{j}\omega_{ij}c_ic_j d\mathbf{x} \right)+ \sum\limits_{i}Co(z_iV,\partial_tc_i)\nonumber\\
& = - \sum\limits_{i}Co\int_{\Omega}\nabla\cdot(\mathbf{u} c_i) (\log c_i + z_iV + \sum\limits_{j}\omega_{ij}c_j)  d\mathbf{x} -\frac{Co}{Pe}\sum\limits_{i}\int_{\Omega}  c_i|\nabla g_i|^2 d\mathbf{x}\nonumber\\
& = \sum\limits_{i}Co\left(\int_{\Omega} \mathbf{u}\cdot\nabla c_i d\mathbf{x} + \int_{\Omega} z_i c_i\mathbf{u}\cdot \nabla V d\mathbf{x}
+\int_{\Omega} \mathbf{u}\cdot\nabla(\frac{1}{2}\sum\limits_{j}\omega_{ij}c_ic_j) d\mathbf{x} -\frac{1}{Pe}\int_{\Omega}  c_i|\nabla g_i|^2 d\mathbf{x}\right)\nonumber\\
&=\int_{\Omega} \sum\limits_{i}Coz_i c_i\mathbf{u}\cdot \nabla V d\mathbf{x} -\frac{Co}{Pe}\sum\limits_{i}\int_{\Omega} c_i|\nabla g_i|^2 d\mathbf{x}.
\end{align}
Combining \eqref{Possion1} and \eqref{concentration1}, we have
\begin{align}\label{PNP-Energy}
\frac{d}{dt}(E_{ent}+ E_{ster} + E_{V}) = -\frac{Co}{Pe}\sum\limits_{i}\int_{\Omega} c_i|\nabla g_i|^2 d\mathbf{x}  + \int_{\Omega} \sum\limits_{i}Coz_i c_i\mathbf{u}\cdot \nabla V d\mathbf{x},
\end{align}
From \eqref{NS1} and \eqref{PNP-Energy}, the obtained energy law reads as follows:
\begin{align}\label{Energy1}
\frac{dE}{dt} = -\frac{Co}{Pe}\sum\limits_{i}\int_{\Omega} c_i|\nabla g_i|^2 d\mathbf{x} -\frac{1}{Re}\|\sqrt{2\mu_p}\mathbb{D}\mathbf{u}\|_F^2 \leq0.
\end{align}
\end{enumerate}

\subsection{Reformulation}
To preserve the positivity of the ion concentration, we consider the logarithm transformation $c_i=\exp(\sigma_i)$. Then the evolution of $\sigma_i$ is given by
\begin{align}
&\partial_t\sigma_i + \mathbf{u}\cdot\nabla \sigma_i =
\frac{1}{Pe}\Big[|\nabla \sigma_i|^2 + \Delta \sigma_i + z_i\Delta V + z_i\nabla\sigma_i\cdot\nabla V \nonumber\\
&+\sum\limits_{j}\omega_{ij}\nabla\sigma_i\cdot\nabla \sigma_j c_j + \sum\limits_{j}\omega_{ij} |\nabla \sigma_j|^2c_j + \sum\limits_{j}\omega_{ij}\Delta \sigma_j c_j \Big].
\end{align}
Next, to construct a decoupling and energy stable scheme, we introduce a nonlocal variable $r(t)$ such that
\begin{align}\label{r-equation}
r(t) = \sqrt{E_{SPNP} + B},
\end{align}
where $E_{SPNP}=E_{ent}+ E_{ster} + E_{V} $, $B$ is a positive constant to guarantee the radicand is always positive. Note that $E_{V}$, $E_{ent}$ and $E_{ster}$ are convex functions, we can always find such a constant $B$ since the summation of $E_{V}$, $E_{ent}$ and $E_{ster}$ are bounded from below. Then we  define a nonlocal variable $\xi(t)$ such that
\begin{align*}
\xi(t)=\frac{r}{\sqrt{E_{SPNP} + B}}.
\end{align*}
It is easy to see that $\xi(t)=1$. Differentiating \eqref{r-equation} with respect to time $t$, using \eqref{PNP-Energy}, adding the zero-valued term  $\int_{\Omega}(\mathbf{u}\cdot\nabla)\mathbf{u}\cdot \mathbf{u} d\mathbf{x}$ and multiplying by the factor $\xi(t)$, the associated ordinary differential equation is
\begin{align}\label{dr-equation}
\frac{dr}{dt}= \frac{1}{2\sqrt{E_{SPNP} + B}}\Big(- \frac{Co}{Pe}\xi\sum\limits_{i}\int_{\Omega}  c_i|\nabla g_i|^2 d\mathbf{x}
+ \int_{\Omega} \sum\limits_{i}Coz_i c_i\mathbf{u}\cdot \nabla V d\mathbf{x} + \int_{\Omega}(\mathbf{u}\cdot\nabla)\mathbf{u}\cdot \mathbf{u} d\mathbf{x} \Big).
\end{align}
Then by combining the nonlocal variables $r(t)$, $\xi(t)$ and the evolution \eqref{dr-equation}, the system \eqref{nondimensional-model} is reformulated to the following form:
\begin{subequations}\label{equivalent-model}
\begin{align}
& \partial_t\mathbf{u} + \xi(\mathbf{u}\cdot\nabla)\mathbf{u} - \frac{1}{Re}\nabla\cdot(2\mu_p\mathbb{D}\mathbf{u})+ \nabla p = -Co\xi\sum\limits_{i}z_ic_i\nabla V,\label{equivalent-NS} \\
&\nabla\cdot \mathbf{u}=0,\label{equivalent-incompressible} \\
&c_i=\exp(\sigma_i),\label{equivalent-concentration}\\
&\partial_t\sigma_i + \mathbf{u}\cdot\nabla \sigma_i =
\frac{1}{Pe}\Big[|\nabla \sigma_i|^2 + \Delta \sigma_i + z_i\Delta V + z_i\nabla\sigma_i\cdot\nabla V \nonumber\\
&+\sum\limits_{j}\omega_{ij}\nabla\sigma_i\cdot\nabla \sigma_j c_j + \sum\limits_{j}\omega_{ij} |\nabla \sigma_j|^2c_j + \sum\limits_{j}\omega_{ij}\Delta \sigma_j c_j \Big], \label{equivalent-logconcentration}\\
&- \lambda\Delta\bar{V} = \sum\limits_{i}z_ic_i,\label{equivalent-Possion}\\
&V = \xi\bar{V},\\
&\frac{dr}{dt} = \frac{1}{2\sqrt{E_{SPNP} + B}}\Big(-\frac{Co}{Pe}\xi\sum\limits_{i}\int_{\Omega} c_i|\nabla g_i|^2 d\mathbf{x} \nonumber\\
&+ \int_{\Omega} \sum\limits_{i}Coz_i c_i\mathbf{u}\cdot \nabla V d\mathbf{x} + \int_{\Omega}(\mathbf{u}\cdot\nabla)\mathbf{u}\cdot \mathbf{u} d\mathbf{x} \Big),\label{equivalent-r}\\
&\mu_p = \mu_{\infty} +(\mu_0 - \mu_{\infty})\Big( 1 + \lambda_1^2(2\mathbb{D}\mathbf{u}:\mathbb{D}\mathbf{u})\Big)^{\frac{k-1}{2}}.
\end{align}
\end{subequations}
The transformed system \eqref{equivalent-model} satisfies the following initial conditions:
\begin{align*}
&c_i|_{t=0}=c_{i0}, \quad -\lambda\Delta V|_{t=0} = \sum\limits_{i}z_ic_{i0}, \quad \bm{u}|_{t=0}=\bm{u}_0,\\
&r|_{t=0}=\sqrt{ \int_{\Omega} \frac{1}{2}\lambda Co|\nabla V_0|^2 d\mathbf{x} + Co\int_{\Omega} \sum\limits_{i}c_{i0}(\log c_{i0}-1) d\mathbf{x} + \int_{\Omega} \frac{1}{2}Co\sum\limits_{i}\sum\limits_{j}\omega_{ij}c_{i0}c_{j0} d\mathbf{x} + B}.
\end{align*}
Note that the new system \eqref{equivalent-model} is equivalent to the original PDE system \eqref{nondimensional-model}. Also, the new system \eqref{equivalent-model} follows an energy dissipative law. Taking the $L^2$ inner product of \eqref{equivalent-NS} with $\mathbf{u}$, using \eqref{equivalent-incompressible} and integration by parts, we obtain
\begin{align}\label{0-1}
\frac{d}{dt}\int_{\Omega}\frac{1}{2}|\mathbf{u}|^2d\mathbf{x} = -\frac{1}{Re}\|\sqrt{2\mu_p}\mathbb{D}\mathbf{u}\|_F^2 - Co\xi\left(\sum\limits_{i}z_ic_i\nabla V,\mathbf{u}\right).
\end{align}
By multiplying \eqref{equivalent-r} with $2r$, we obtain
\begin{align}\label{0-2}
\frac{d r^2}{dt} = -\frac{Co}{Pe}\left|\xi\right|^2\sum\limits_{i}\int_{\Omega} c_i|\nabla g_i|^2 d\mathbf{x} + Co\xi \int_{\Omega} \sum\limits_{i}z_i c_i\mathbf{u}\cdot \nabla V d\mathbf{x}.
\end{align}
Combining \eqref{0-1} and \eqref{0-2}, the obtained energy law in the new form can read as:
\begin{align}\label{energylaw}
\frac{d}{dt}\left(\int_{\Omega}\frac{1}{2}|\mathbf{u}|^2d\mathbf{x} + r^2-B\right)= -\frac{Co}{Pe}\left|\xi\right|^2\sum\limits_{i}\int_{\Omega} c_i|\nabla g_i|^2 d\mathbf{x} - \frac{1}{Re}\|\sqrt{2\mu_p}\mathbb{D}\mathbf{u}\|_F^2.
\end{align}
\section{Full discrete scheme}
Let $0<h<1$ denote the mesh size and $\mathcal{T}_{h}$ be a uniform partition of $\bar{\Omega}=\cup_{K\in \mathcal{T}_{h} }{K}$ into non-overlapping triangles. Given a $\mathcal{T}_{h}$, we consider the following finite element space
\begin{align*}
&\bm{X}_{h}=\left\{\mathbf{v}_h\in H_0^1(\Omega)^2\cap C^0(\bar{\Omega})^2
: {v_h}_{|K}\in P_{l_1}(K)^2, \forall K\in \mathcal{T}_{h}\right\},\\
&M_{h}=\left\{q_{h}\in L_0^2(\Omega)\cap C^0(\bar{\Omega}): {q_h}_{|K}\in P_{l_2}(K), \forall K\in \mathcal{T}_{h}\right\},\\
&Q_{h}=\left\{\varphi_{h}\in H^1(\Omega)\cap C^0(\bar{\Omega}): {\varphi_h}_{|K}\in P_{l_3}(K), \forall K\in \mathcal{T}_{h}\right\},\\
&S_{h}=Q_h \cap L_0^2(\Omega),
\end{align*}
where $P_{l_m}$ is the space of piecewise polynomials of total degree no more than $l_m$, $L_{0}^2(\Omega)=\left\{q\in L^2(\Omega) : \int_{\Omega}qdx=0\right\}$ and $H_{0}^1(\Omega)=\left\{s\in H^1(\Omega) : s=0 \ \ \mbox{on}\ \ \partial\Omega\right\}$. 
Additionally, assume that the finite element spaces $(\bm{X}_{h},M_{h})$ satisfy the discrete inf-sup inequality \cite{girault1979analysis}: for each $q_h\in M_h$, there exists a positive constant $\beta>0$ such that
\begin{equation*}
\sup\limits_{\mathbf{v}_h\in \bm{X}_{h},\mathbf{v}_h\neq0}\frac{(q_h,\nabla\cdot \mathbf{v}_{h})}{\|\nabla \mathbf{v}_{h}\|_{0}}\geq\beta\|q_h\|_{0}.
\end{equation*}
To discrete \eqref{equivalent-model} in space by using finite element method, we first write variational formulations for \eqref{equivalent-model} which reads as: find $(\sigma,c_i, V, \bar{V},\bm{u}, p)\in H^1(\Omega)\times H^1(\Omega)\times (H^1(\Omega)\cap L_0^2(\Omega))\times (H^1(\Omega)\cap L_0^2(\Omega))  \times H_0^1(\Omega)^2\times L_0^2(\Omega)  $ such that for all $(\eta_i,\phi, \bm{v}, q)\in  H^1(\Omega)\times H^1(\Omega)\times H_0^1(\Omega)^2\times L_0^2(\Omega) $,
\begin{subequations}\label{equivalent-model}
\begin{align}
&( \partial_t\bm{u},\bm{v}) + ( \xi(\bm{u}\cdot\nabla)\bm{u},\bm{v}) + \frac{1}{Re}(2\mu_p\mathbb{D}\bm{u},\mathbb{D}\bm{v})+ ( \nabla p,\bm{v}) = -Co ( \sum\limits_{i}z_ic_i\nabla V,\bm{v})\label{Var-equivalent-NSS} \\
&(\nabla\cdot \bm{u},q)=0,\label{Var-equivalent-incompressible} \\
&c_i=\exp(\eta_i),\label{Var-equivalent-concentration}\\
&(\partial_t\sigma_i,\eta_{i}) + (\bm{u}\cdot\nabla \sigma_i,\eta_{i}) =
\frac{1}{Pe}\Big[(|\nabla \sigma_i|^2,\eta_{i}) - ( \nabla \sigma_i,\nabla \eta_{i}) - z_i(\nabla V,\nabla \eta_{i})  \nonumber\\
&+ z_i(\nabla\sigma_i\cdot\nabla V , \eta_{i}) +\sum\limits_{j}(\omega_{ij}\nabla\sigma_i\cdot\nabla \sigma_j c_j, \eta_{i}) - \sum\limits_{j}\omega_{ij}\nabla \sigma_j c_j,\nabla  \eta_{i}) \Big], \label{Var-equivalent-logconcentration}\\
&\lambda (\nabla\bar{V},\nabla \phi) = \sum\limits_{i}(z_ic_i,\phi),\label{Var-equivalent-Possion}\\
&\xi=\frac{r }{\sqrt{E_{SPNP} + B}},\\
&V = \xi\bar{V},\\
&\frac{dr}{dt} = \frac{1}{2\sqrt{E_{SPNP} + B}}\Big(-\frac{Co}{Pe}\xi\sum\limits_{i}\int_{\Omega} c_i|\nabla g_i|^2 d\mathbf{x} \nonumber\\
&\qquad + \int_{\Omega} \sum\limits_{i}Coz_i c_i\bm{u}\cdot \nabla V d\mathbf{x} + \int_{\Omega}(\bm{u}\cdot\nabla)\bm{u}\cdot \bm{u} d\mathbf{x} \Big),\label{Var-equivalent-r}\\
&\mu_p = \mu_{\infty} +(\mu_0 - \mu_{\infty})\Big( 1 + \lambda^2(2\mathbb{D}\bm{u}:\mathbb{D}\bm{u})\Big)^{\frac{k-1}{2}}.
\end{align}
\end{subequations}

\subsection{Numerical scheme}

We discretize the system \eqref{equivalent-model} in space by finite element method and in time by second-order backward differentiation formula, leading to the fully discrete second-order scheme: find 
$(\sigma_{ih}^{n+1},\bar{c}_{ih}^{n+1},c_{ih}^{n+1}, \bar{V}_{h}^{n+1},
V_{h}^{n+1}, \bm{u}_{h}^{n+1},\\p_{h}^{n+1}) \in (Q_h,Q_h,Q_h,S_h,S_h,\bm{X}_h,M_h)$, such that for all
 $(\eta_{ih},\phi_h,\bm{v}_{h},q_{h})\in (Q_h,Q_h,\bm{X}_h,M_h)$
\begin{subequations}\label{numerical-model}
\begin{align}
& (\frac{3\tilde{\mathbf{u}}^{n+1}_{h} -4\mathbf{u}^{n}_{h} +\mathbf{u}^{n-1}_{h}}{2\Delta t},\mathbf{v}_h) + \xi^{n+1}((\mathbf{u}^{*}_{h}\cdot\nabla)\mathbf{u}^{*}_{h},\mathbf{v}_h) + \frac{1}{Re} (2\mu_{ph}^*\mathbb{D}\tilde{\mathbf{u}}^{n+1}_{h},\mathbb{D}\mathbf{v}_h) \nonumber\\
&+ (\nabla p^{n}_{h},\mathbf{v}_h)= -Co\xi^{n+1}(\sum\limits_{i}z_ic_{ih}^{n+1}\nabla \bar{V}_{h}^{n+1},\mathbf{v}_h),\label{discrete-numerical-NS} \\
&(\nabla \psi^{n+1}_h,\nabla q_h) = \frac{3}{2\Delta t}(\tilde{\mathbf{u}}_{h}^{n+1},\nabla q_h),\label{discrete-numerical-psi}\\
&\mathbf{u}_h^{n+1} = \tilde{\mathbf{u}}_h^{n+1} - \frac{2\Delta t}{3}\nabla \psi_h^{n+1},\label{discrete-numerical-correction-u}\\
&p^{n+1}_h=  \psi^{n+1}_h + p_h^{n},\label{discrete-numerical-correction-p}\\
&(\frac{ 3\sigma_{ih}^{n+1} - 4\sigma_{ih}^{n} + \sigma_{ih}^{n-1} }{2\Delta t},\eta_{ih}) + (\mathbf{u}_h^{*}\cdot\nabla \sigma_{ih}^{n+1}, \eta_{ih})+ (\nabla \sigma_{ih}^{n+1},\nabla\eta_{ih})\nonumber \\
&= \frac{1}{Pe}\Big((\nabla \sigma_{ih}^{*}\cdot\nabla \sigma_{ih}^{n+1}, \eta_{ih}) - z_i(\nabla V_h^{*},\nabla\eta_i) + (z_i\nabla\sigma_{ih}^{n+1}\cdot\nabla V_h^{*}, \eta_{ih})\nonumber\\
&+ (\sum\limits_{j}\omega_{ij}\nabla\sigma_{ih}^{n+1}\cdot\nabla \sigma_{jh}^{*} c_{jh}^{*},\eta_{ih} ) -  (\omega_{ij}\nabla \sigma_{jh}^{*} c_{jh}^{*},\nabla\eta_{ih}) -  (\omega_{ii}\nabla \sigma_{ih}^{n+1} c_{ih}^{*},\nabla\eta_{ih}) \Big),\label{discrete-numerical-logconcentration}\\
&\bar{c}_{ih}^{n+1}=\exp(\sigma_{ih}^{n+1}),\label{discrete-numerical-concentration}\\
&c_{ih}^{n+1}=\frac{( c_{ih}^{n},1) }{( \bar{c}_{ih}^{n+1},1)}\bar{c}_{ih}^{n+1},\label{discrete-numerical-concentration-mass}\\
&(\lambda\nabla \bar{V}_{h}^{n+1}, \nabla\phi_h)= \sum\limits_{i}(z_ic_{ih}^{n+1}, \phi_h),\label{discrete-numerical-Possion}\\
&V^{n+1}_h = \xi^{n+1}\bar{V}^{n+1},\\
&\xi^{n+1}=\frac{r^{n+1}}{\sqrt{\bar{E}_{SPNP}^{n+1} + B}},\\
&\frac{3r^{n+1}-4r^{n}+r^{n-1}}{2\Delta t} = \frac{1}{2\sqrt{\bar{E}_{SPNP}^{n+1} + B}}\Big( -\frac{Co}{Pe}\xi^{n+1}\sum\limits_{i}\|\sqrt{ c_{ih}^{n+1}}\nabla g_{ih}^{n+1}\|^2 \nonumber\\
&+Co (\sum\limits_{i}z_i c_{ih}^{n+1}\tilde{\mathbf{u}}^{n+1}_{h} ,\nabla \bar{V}_h^{n+1} ) + ((\mathbf{u}_{h}^{*}\cdot\nabla)\mathbf{u}_{h}^{*}, \tilde{\mathbf{u}}^{n+1}_{h}    ) \Big),\label{discrete-numerical-r}\\
&\mu_{ph}^{n+1} = \mu_{\infty} +(\mu_0 - \mu_{\infty})\Big( 1 + \lambda_1^2(2\mathbb{D}\mathbf{u}^{n+1}_{h}:\mathbb{D}\mathbf{u}^{n+1}_{h})\Big)^{\frac{k-1}{2}}.
\end{align}
\end{subequations}
where
\begin{align*}
&\mathbf{u}^{*}_{h} = 2\mathbf{u}^{n}_{h}-\mathbf{u}^{n-1}_{h}, \quad \bar{g}_{ih}^{n+1} = \log c_{ih}^{n+1} + z_i\bar{V}_h^{n+1} + \sum\limits_{j}\omega_{ij}c_{jh}^{n+1}, \\
& \sigma_{ih}^{*} = 2\sigma_{ih}^{n} - \sigma_{ih}^{n-1},\quad c_{ih}^{*} = 2c_{ih}^{n} - c_{ih}^{n-1},\quad \mu_{ph}^*=2\mu_{ph}^{n}-\mu_{ph}^{n-1},\\
&\bar{E}_{SPNP}^{n+1}=  \frac{\lambda Co}{2}\|\nabla \bar{V}_{h}^{n+1}\|^2  + Co \sum\limits_{i}(c_{ih}^{n+1},\log c_{ih}^{n+1}-1)  +  \frac{Co}{2}\sum\limits_{i}\sum\limits_{j}\omega_{ij}(c_{ih}^{n+1},c_{jh}^{n+1}) .
\end{align*}

\begin{theorem}
The fully discrete scheme \eqref{numerical-model} satisfies the following properties:
\begin{enumerate}
  \item Positivity preservation: $c_{ih}^{n+1} >0$.
  \item Mass conservation:
  \begin{align*}
   (c_{ih}^{n},1)  =   (c_{ih}^{n+1},1) .
  \end{align*}
  \item Energy dissipation:
  \begin{align}\label{discrete-energy-law}
\frac{E^{n+1}_h - E^{n}_h}{\Delta t} \leq - \frac{1}{Re} \|\sqrt{2\mu_{ph}^{*}}\mathbb{D}\tilde{\mathbf{u}}^{n+1}_h\|_F^2 - |\xi^{n+1}|^2\frac{Co}{Pe}\sum\limits_{i} \|\sqrt{ c_{ih}^{n+1}}\nabla \bar{g}_{ih}^{n+1}\|^2,
\end{align}
where
\begin{align*}
E^{n+1}_h = \frac{1}{2}\left(\frac{1}{2}\|\mathbf{u}^{n+1}_h\|^2+\frac{1}{2}\|2\mathbf{u}^{n+1}_h-\mathbf{u}^{n}_h\|^2\right) + \frac{\Delta t^2}{3}\|\nabla p_h^{n+1}\|^2 + \frac{1}{2}(|r^{n+1}|^2+|2r^{n+1}-r^{n}|^2).
\end{align*}
\end{enumerate}
\end{theorem}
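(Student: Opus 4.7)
The first two claims are immediate. For positivity, $\bar{c}_{ih}^{n+1}=\exp(\sigma_{ih}^{n+1})>0$ pointwise, so $(\bar{c}_{ih}^{n+1},1)>0$; assuming inductively that $(c_{ih}^{n},1)>0$ (guaranteed by the positive initial data), the rescaling in~\eqref{discrete-numerical-concentration-mass} immediately yields $c_{ih}^{n+1}>0$. Mass conservation follows by testing~\eqref{discrete-numerical-concentration-mass} with the constant function $1$:
\begin{equation*}
(c_{ih}^{n+1},1)=\frac{(c_{ih}^{n},1)}{(\bar{c}_{ih}^{n+1},1)}(\bar{c}_{ih}^{n+1},1)=(c_{ih}^{n},1).
\end{equation*}

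For the energy estimate, the plan is to test the momentum equation~\eqref{discrete-numerical-NS} with $\mathbf{v}_h=2\Delta t\,\tilde{\mathbf{u}}_h^{n+1}$ and, in parallel, multiply the discrete SAV equation~\eqref{discrete-numerical-r} by $4\Delta t\,r^{n+1}$. The BDF2 identity
\begin{equation*}
2(3a-4b+c)\,a=(a^2-b^2)+((2a-b)^2-(2b-c)^2)+(a-2b+c)^2,
\end{equation*}
applied with $a=r^{n+1}$, $b=r^{n}$, $c=r^{n-1}$, combined with the substitution $2r^{n+1}/\sqrt{\bar{E}_{SPNP}^{n+1}+B}=\xi^{n+1}$, turns the $r$-equation into a telescoping identity whose right-hand side contains the dissipative term $-2\Delta t\frac{Co}{Pe}|\xi^{n+1}|^2\sum_i\|\sqrt{c_{ih}^{n+1}}\nabla\bar{g}_{ih}^{n+1}\|^2$ together with exactly $-2\Delta t\,\xi^{n+1}((\mathbf{u}_h^{*}\cdot\nabla)\mathbf{u}_h^{*},\tilde{\mathbf{u}}_h^{n+1})$ and $-2\Delta t\,Co\,\xi^{n+1}(\sum_i z_i c_{ih}^{n+1}\nabla\bar{V}_h^{n+1},\tilde{\mathbf{u}}_h^{n+1})$. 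These two pieces match the convective and Coulomb couplings produced on the momentum side and cancel when the two relations are summed; this is the structural reason for introducing the nonlocal variable $r$.

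The main obstacle is managing the projection step to recover the correct velocity and pressure norms on the left-hand side. I will rewrite $(3\tilde{\mathbf{u}}_h^{n+1}-4\mathbf{u}_h^{n}+\mathbf{u}_h^{n-1},\tilde{\mathbf{u}}_h^{n+1})$ by inserting $\tilde{\mathbf{u}}_h^{n+1}=\mathbf{u}_h^{n+1}+\frac{2\Delta t}{3}\nabla\psi_h^{n+1}$ from~\eqref{discrete-numerical-correction-u} and using the weak divergence-free property $(\mathbf{u}_h^{k},\nabla q_h)=0$ for all $q_h\in M_h$ (a consequence of~\eqref{discrete-numerical-psi} and~\eqref{discrete-numerical-correction-u}) together with the projection test $(\nabla\psi_h^{n+1},\tilde{\mathbf{u}}_h^{n+1})=\frac{2\Delta t}{3}\|\nabla\psi_h^{n+1}\|^2$; the BDF2 identity then yields the telescoping of $\frac{1}{2}\|\mathbf{u}_h^{n+1}\|^2+\frac{1}{2}\|2\mathbf{u}_h^{n+1}-\mathbf{u}_h^{n}\|^2$ plus a non-negative $\frac{4\Delta t^2}{3}\|\nabla\psi_h^{n+1}\|^2$ remainder. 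For the pressure term $2\Delta t(\nabla p_h^{n},\tilde{\mathbf{u}}_h^{n+1})$, I substitute $p_h^{n}=p_h^{n+1}-\psi_h^{n+1}$ from~\eqref{discrete-numerical-correction-p}, re-use the weak divergence-free property, and apply the parallelogram identity to $(\nabla p_h^{n+1},\nabla\psi_h^{n+1})$ to extract $\frac{2\Delta t^2}{3}(\|\nabla p_h^{n+1}\|^2-\|\nabla p_h^{n}\|^2)$ together with one more non-negative $\|\nabla\psi_h^{n+1}\|^2$ contribution.

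Collecting everything and dividing by $2\Delta t$, the left-hand side telescopes into $E_h^{n+1}-E_h^{n}$. The remaining non-negative residuals, namely the BDF2 second-order differences $\frac{1}{4\Delta t}\|\mathbf{u}_h^{n+1}-2\mathbf{u}_h^{n}+\mathbf{u}_h^{n-1}\|^2$ and $\frac{1}{2\Delta t}|r^{n+1}-2r^{n}+r^{n-1}|^2$ together with the net projection residual proportional to $\|\nabla\psi_h^{n+1}\|^2$, can be dropped, leaving only the viscous dissipation $\frac{1}{Re}\|\sqrt{2\mu_{ph}^{*}}\mathbb{D}\tilde{\mathbf{u}}_h^{n+1}\|_F^2$ and the SPNP dissipation $\frac{Co}{Pe}|\xi^{n+1}|^2\sum_i\|\sqrt{c_{ih}^{n+1}}\nabla\bar{g}_{ih}^{n+1}\|^2$ on the right, which is precisely~\eqref{discrete-energy-law}.
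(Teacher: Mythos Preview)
Your proposal is correct and follows essentially the same route as the paper: test the momentum equation with $2\Delta t\,\tilde{\mathbf{u}}_h^{n+1}$, multiply the SAV equation by $4\Delta t\,r^{n+1}$, use the BDF2 polarization identity, and let the convective and Coulomb couplings cancel. The only stylistic difference is in the projection bookkeeping: the paper squares the relation $\frac{\sqrt{3}}{\sqrt{2}}\mathbf{u}_h^{n+1}+\frac{\sqrt{2}}{\sqrt{3}}\Delta t\,\nabla p_h^{n+1}=\frac{\sqrt{3}}{\sqrt{2}}\tilde{\mathbf{u}}_h^{n+1}+\frac{\sqrt{2}}{\sqrt{3}}\Delta t\,\nabla p_h^{n}$ to obtain the pressure telescoping in one stroke, whereas you substitute $\tilde{\mathbf{u}}_h^{n+1}=\mathbf{u}_h^{n+1}+\frac{2\Delta t}{3}\nabla\psi_h^{n+1}$ and $p_h^{n}=p_h^{n+1}-\psi_h^{n+1}$ directly; both give the same identity.

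One bookkeeping caution: the $\|\nabla\psi_h^{n+1}\|^2$ contribution coming from the pressure term is \emph{negative}, not ``one more non-negative'' as you wrote. Concretely, $2\Delta t(\nabla p_h^{n},\tilde{\mathbf{u}}_h^{n+1})=\frac{4\Delta t^2}{3}(\nabla p_h^{n},\nabla\psi_h^{n+1})=\frac{2\Delta t^2}{3}(\|\nabla p_h^{n+1}\|^2-\|\nabla p_h^{n}\|^2)-\frac{2\Delta t^2}{3}\|\nabla\psi_h^{n+1}\|^2$. This is harmless because the $+\frac{4\Delta t^2}{3}\|\nabla\psi_h^{n+1}\|^2$ from the BDF2 step absorbs it, leaving the net residual $+\frac{2\Delta t^2}{3}\|\nabla\psi_h^{n+1}\|^2\ge 0$ that you correctly drop; but the sign should be stated accurately.
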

\begin{proof}
First, we prove the positivity preserving. It is easy to see that $\bar{c}_{ih}^{n+1} >0$ from \eqref{discrete-numerical-concentration} and $c_{ih}^{n} >0$ from positivity of the ion concentration at the previous step. Thus we can arrive at
\begin{align*}
(\bar{c}_{ih}^{n+1},1) >0, \quad   ({c}_{ih}^{n} ,1)>0.
\end{align*}
By \eqref{discrete-numerical-concentration-mass}, we have $c_{ih}^{n+1} >0$.

Next, the mass conservation can be obtained by taking the $L^2$ inner product   with the constant function 1 on both sides of the equation \eqref{discrete-numerical-concentration-mass}.

It remains to prove the energy dissipation. Taking $\mathbf{v}_h=2\Delta t\widetilde{\mathbf{u}}_h^{n+1}$ in \eqref{discrete-numerical-NS}, we have
\begin{align}\label{discrete-numerical-NS1}
&(3\tilde{\mathbf{u}}^{n+1}_{h} -4\mathbf{u}^{n}_{h} +\mathbf{u}^{n-1}_{h},\tilde{\mathbf{u}}_h^{n+1}) + 2\Delta t\xi^{n+1}((\mathbf{u}^{*}_{h}\cdot\nabla)\mathbf{u}^{*}_{h},\tilde{\mathbf{u}}_h^{n+1} ) + 2\Delta t(\nabla p^{n}_{h},\tilde{\mathbf{u}}_h^{n+1})\nonumber\\
&= -2\Delta tCo\xi^{n+1}(\sum\limits_{i}z_ic_{ih}^{n+1}\nabla \bar{V}_{h}^{n+1},\tilde{\mathbf{u}}_h^{n+1})- \frac{2\Delta t}{Re} \|\sqrt{2\mu_{ph}^{*}}\mathbb{D}\tilde{\mathbf{u}}^{n+1}_h\|_F^2.
\end{align}
From \eqref{discrete-numerical-psi} - \eqref{discrete-numerical-correction-p}, we have
\begin{align}%
&(\mathbf{u}^{n+1}_h,\nabla q_h)=0,\label{correction-1}\\
&(\mathbf{u}^{n+1}_h,\tilde{\mathbf{u}}^{n+1}_h-\mathbf{u}^{n+1}_h)=\left(\mathbf{u}^{n+1}_h,\frac{2\Delta t }{3}\nabla(p_h^{n+1}-p_h^{n})\right)=0.\label{correction-2}
\end{align}
It follows from \eqref{discrete-numerical-psi} and \eqref{discrete-numerical-correction-p} that
\begin{align*}
\frac{\sqrt{3}}{\sqrt{2}}\mathbf{u}_h^{n+1} + \frac{\sqrt{2}}{\sqrt{3} }\Delta t\nabla p_h^{n+1} =  \frac{\sqrt{3}}{\sqrt{2}}\tilde{\mathbf{u}}_h^{n+1} + \frac{\sqrt{2}}{\sqrt{3}} \Delta t \nabla p_h^{n}.
\end{align*}
By taking the discrete inner product of the above equality with itself on both sides and using \eqref{correction-1}, we get
\begin{align}\label{discrete-numerical-p-1}
\frac{3}{2}\|\mathbf{u}_h^{n+1}\|^2 + \frac{2 }{3}(\Delta t)^2\|\nabla p_h^{n+1}\|^2 =  \frac{3}{2}\|\tilde{\mathbf{u}}_h^{n+1}\|^2 + \frac{2 }{3} (\Delta t)^2\|\nabla p_h^{n}\|^2 + 2\Delta t(\nabla p_h^{n}, \tilde{\mathbf{u}}_h^{n+1}).
\end{align}
Using \eqref{correction-2} and the following identity
\begin{align}\label{identity}
&2(a-b)a=|a|^2-|b|^2+|a-b|^2,\\
&2(3a-4b+c)a=|a|^2+|2a-b|^2-|b|^2-|2b-c|^2+|a-2b+c|^2,
\end{align}
we have
\begin{align}\label{discrete-numerical-NS2}
&(3\tilde{\mathbf{u}}_h^{n+1}-4 \mathbf{u}_h^n+\mathbf{u}_h^{n-1}, \tilde{\mathbf{u}}_h^{n+1}) = \Big(3 (\tilde{\mathbf{u}}_h^{n+1}-\mathbf{u}_h^{n+1}) + 3\mathbf{u}_h^{n+1} -4 \mathbf{u}_h^n+\mathbf{u}_h^{n-1}, \tilde{\mathbf{u}}_h^{n+1}\Big) \nonumber\\
&= 3(\tilde{\mathbf{u}}_h^{n+1}- \mathbf{u}_h^{n+1} ,\tilde{\mathbf{u}}_h^{n+1}) + (3 {\mathbf{u}}_h^{n+1}-4 \mathbf{u}_h^n+\mathbf{u}_h^{n-1}, \mathbf{u}_h^{n+1}) + (3 {\mathbf{u}}_h^{n+1}-4 \mathbf{u}_h^n+\mathbf{u}_h^{n-1}, \tilde{\mathbf{u}}_h^{n+1}-\mathbf{u}_h^{n+1} )  \nonumber\\
& =\frac{3}{2}\Big(\|\tilde{\mathbf{u}}_h^{n+1} \|^2 - \|\mathbf{u}_h^{n+1} \|^2 + \| \tilde{\mathbf{u}}_h^{n+1}-\mathbf{u}_h^{n+1} \|^2 \Big) + \frac{1}{2} \Big(\|\mathbf{u}_h^{n+1}\|^2+\|2 \mathbf{u}_h^{n+1}-\mathbf{u}_h^n\|^2-\|\mathbf{u}_h^n\|^2 \nonumber\\
& \quad - \|2 \mathbf{u}_h^n-\mathbf{u}_h^{n-1} \|^2 + \|\mathbf{u}_h^{n+1}-2 \mathbf{u}_h^n+\mathbf{u}_h^{n-1} \|^2 \Big)
\end{align}
By multiplying \eqref{discrete-numerical-r} with $4\Delta t r^{n+1}$ and using \eqref{identity}, we obtain
\begin{align}\label{discrete-numerical-r1}
&|r^{n+1}|^2+|2r^{n+1}-r^{n}|^2-|r^{n}|^2-|2r^{n}-r^{n-1}|^2+|r^{n+1}-2r^{n}+r^{n-1}|^2 \nonumber\\
&=2\Delta t \xi^{n+1}\Big( -\frac{Co}{Pe}\xi^{n+1}\sum\limits_{i}\|\sqrt{ c_{ih}^{n+1}}\nabla g_{ih}^{n+1}\|^2    +Co (\sum\limits_{i}z_i c_{ih}^{n+1}\tilde{\mathbf{u}}^{n+1}_{h} ,\nabla \bar{V}_h^{n+1} ) + ((\mathbf{u}_{h}^{*}\cdot\nabla)\mathbf{u}_{h}^{*}, \tilde{\mathbf{u}}^{n+1}_{h} ) \Big).
\end{align}
Combining \eqref{discrete-numerical-NS1},\eqref{discrete-numerical-p-1},\eqref{discrete-numerical-NS2} and \eqref{discrete-numerical-r1}, we arrive at
\begin{align}\label{discrete-energy-law1}
&\frac{1}{2}\left(\|\mathbf{u}_h^{n+1}\|^2+\|2\mathbf{u}_h^{n+1}-\mathbf{u}_h^{n}\|^2-\|\mathbf{u}_h^{n}\|^2-\|2\mathbf{u}_h^{n}-\mathbf{u}_h^{n-1}\|^2+\|\mathbf{u}_h^{n+1}-2\mathbf{u}_h^{n}+\mathbf{u}_h^{n-1}\|^2\right)\nonumber\\
& + \frac{2\Delta t^2}{3}(\|\nabla p_h^{n+1}\|^2-\|\nabla p_h^{n}\|^2) + \frac{3}{2}\|\mathbf{u}_h^{n+1}-\tilde{\mathbf{u}}_h^{n+1}\|^2 +|r^{n+1}|^2+|2r^{n+1}-r^{n}|^2-|r^{n}|^2 \nonumber\\
&-|2r^{n}-r^{n-1}|^2+|r^{n+1}-2r^{n}+r^{n-1}|^2=- \frac{2\Delta t}{Re} \|\sqrt{2\mu_{ph}^{*}}\mathbb{D}\tilde{\mathbf{u}}^{n+1}_h\|_F^2 \nonumber\\
& - 2\Delta t|\xi^{n+1}|^2\frac{Co}{Pe}\sum\limits_{i} \|\sqrt{ c_{ih}^{n+1}}\nabla g_{ih}^{n+1}\|^2.
\end{align}	
Finally, we can obtain \eqref{discrete-energy-law} from \eqref{discrete-energy-law1}.
\end{proof}

\subsection{Implementation}
It seems that the developed scheme is not a fully decoupled scheme. Any direct iterative methods needs much time consumptions. Therefore, we using split technique to eliminate all nonlocal terms and obtain a fully decoupled implementation.\\
\textbf{Step 1}. Find $\sigma_{ih}^{n+1}\in Q_h$ such that
\begin{align}
&(\frac{ 3\sigma_{ih}^{n+1} - 4\sigma_{ih}^{n} + \sigma_{ih}^{n-1} }{2\Delta t},\eta_{ih}) + (\mathbf{u}_h^{*}\cdot\nabla \sigma_{ih}^{n+1}, \eta_{ih})+ (\nabla \sigma_{ih}^{n+1},\nabla\eta_{ih})\nonumber \\
&= \frac{1}{Pe}\Big((\nabla \sigma_{ih}^{*}\cdot\nabla \sigma_{ih}^{n+1}, \eta_{ih}) - z_i(\nabla V_h^{*},\nabla\eta_i) + (z_i\nabla\sigma_{ih}^{n+1}\cdot\nabla V_h^{*}, \eta_{ih})\nonumber\\
&+ (\sum\limits_{j}\omega_{ij}\nabla\sigma_{ih}^{n+1}\cdot\nabla \sigma_{jh}^{*} c_{jh}^{*},\eta_{ih} ) -  (\omega_{ij}\nabla \sigma_{jh}^{*} c_{jh}^{*},\nabla\eta_{ih}) -  (\omega_{ii}\nabla \sigma_{ih}^{n+1} c_{ih}^{*},\nabla\eta_{ih}) \Big),\quad \forall \eta_i\in Q_h.
\end{align}
\textbf{Step 2}. Compute $c_{ih}^{n+1}$ by
\begin{align*}
&\bar{c}_{ih}^{n+1}=\exp(\sigma_{ih}^{n+1}),\\
&c_{ih}^{n+1}=\frac{( c_{ih}^{n},1) }{( \bar{c}_{ih}^{n+1},1)}\bar{c}_{ih}^{n+1}.
\end{align*}
\textbf{Step 3}. Find $\bar{V}_{h}^{n+1}\in S_h$ such that
\begin{align}\label{decoupled-numerical-Possion}
(\lambda\nabla \bar{V}_{h}^{n+1}, \nabla\phi_{h})= \sum\limits_{i}(z_ic_{ih}^{n+1}, \phi_{h}),\quad \forall \phi_h\in Q_h.
\end{align}
\textbf{Step 4}. Find $\tilde{\mathbf{u}}_{1h}^{n+1},\tilde{\mathbf{u}}_{2h}^{n+1}\in \bm{X}_h$ such that
\begin{align}
& (\frac{3\tilde{\mathbf{u}}_{1h}^{n+1} -4\mathbf{u}_{h}^{n} +\mathbf{u}_{h}^{n-1}}{2\Delta t},\mathbf{v}_{h} ) + \frac{1}{Re}(2\mu_{ph}^*\mathbb{D}\tilde{\mathbf{u}}_{1h}^{n+1},\mathbb{D}\mathbf{v}_{h})  = (\nabla\cdot\mathbf{v}_{h},  p_{h}^{n}),\quad \forall \mathbf{v}_h\in \bm{X}_h,\label{decoupled-numerical-NS1}\\
& (\frac{3\tilde{\mathbf{u}}_{2h}^{n+1}}{2\Delta t},\mathbf{v}_{h} ) + \frac{1}{Re}(2\mu_{ph}^*\mathbb{D}\tilde{\mathbf{u}}_{2h}^{n+1},\mathbb{D}\mathbf{v}_{h}) = - ((\mathbf{u}_{h}^{*}\cdot\nabla)\mathbf{u}_{h}^{*},\mathbf{v}_{h} ) - (\sum\limits_{i}Coz_ic_{ih}^{n+1}\nabla \bar{V}_{h}^{n+1},\mathbf{v}_{h} ),\quad \forall \mathbf{v}_h\in \bm{X}_h.\label{decoupled-numerical-NS2}
\end{align}
The above two subequations are obtained by splitting \eqref{discrete-numerical-NS}. We use the nonlocal scalar variable $\xi^{n+1}$ to split $\tilde{\mathbf{u}}_{h}^{n+1}$ into a linear combination that reads as
\begin{align}\label{split}
\tilde{\mathbf{u}}_{h}^{n+1}=\tilde{\mathbf{u}}_{1h}^{n+1} + \xi^{n+1}\tilde{\mathbf{u}}_{2h}^{n+1}.
\end{align}
By replacing $\tilde{\mathbf{u}}_{h}^{n+1}$ in \eqref{discrete-numerical-NS}, we have
\begin{align}\label{discrete-numerical-NSNS}
&\left(\frac{3(\tilde{\mathbf{u}}_{1h}^{n+1} + \xi^{n+1}\tilde{\mathbf{u}}_{2h}^{n+1}) -4\mathbf{u}^{n}_{h} +\mathbf{u}^{n-1}_{h}}{2\Delta t},\mathbf{v}_h\right) + \xi^{n+1}((\mathbf{u}^{*}_{h}\cdot\nabla)\mathbf{u}^{*}_{h},\mathbf{v}_h) + (\nabla p^{n}_{h},\mathbf{v}_h)  \nonumber\\
&+ \frac{1}{Re} \left(2\mu_{ph}^*\mathbb{D}(\tilde{\mathbf{u}}_{1h}^{n+1} + \xi^{n+1}\tilde{\mathbf{u}}_{2h}^{n+1}),\mathbb{D}\mathbf{v}_h\right) = -Co\xi^{n+1}(\sum\limits_{i}z_ic_{ih}^{n+1}\nabla \bar{V}_{h}^{n+1},\mathbf{v}_h).
\end{align}
According to $\xi^{n+1}$, \eqref{discrete-numerical-NSNS} can be decomposed into two subequations \eqref{decoupled-numerical-NS1} and \eqref{decoupled-numerical-NS2}.\\
\textbf{Step 5}. Compute $\xi^{n+1}$ by
\begin{align}\label{xixi}
\xi^{n+1} = \frac{ 2r^{n} -\frac{1}{2}r^{n-1} + \Delta t\zeta_1}{\frac{3}{2}\sqrt{\bar{E}_{SPNP}^{n+1}+ B} + \Delta t \zeta_2},
\end{align}
where
\begin{align*}
\zeta_1 &= \frac{1}{2\sqrt{\bar{E}_{SPNP}^{n+1} + B}}\left( Co (\sum\limits_{i}z_i c_{ih}^{n+1}\tilde{\mathbf{u}}^{n+1}_{1h} ,\nabla \bar{V}_h^{n+1} ) + ((\mathbf{u}_{h}^{*}\cdot\nabla)\mathbf{u}_{h}^{*}, \tilde{\mathbf{u}}^{n+1}_{1h} ) \right),\\
\zeta_2& = \frac{1}{2\sqrt{\bar{E}_{SPNP}^{n+1} + B}}\left( \frac{Co}{Pe}\sum\limits_{i}\|\sqrt{ c_{ih}^{n+1}}\nabla g_{ih}^{n+1}\|^2   -Co (\sum\limits_{i}z_i c_{ih}^{n+1}\tilde{\mathbf{u}}^{n+1}_{2h} ,\nabla \bar{V}_h^{n+1} ) - ((\mathbf{u}_{h}^{*}\cdot\nabla)\mathbf{u}_{h}^{*}, \tilde{\mathbf{u}}^{n+1}_{2h}  ) \right).
\end{align*}
Formula \eqref{xixi} is obtained by using the split form \eqref{split} in \eqref{discrete-numerical-r}. Equation \eqref{discrete-numerical-r} can be rewritten as 
\begin{align}
&\frac{3\xi^{n+1}\sqrt{\bar{E}_{SPNP}^{n+1}+ B} - 4r^{n} + r^{n-1}}{2\Delta t} = \frac{1}{2\sqrt{\bar{E}_{SPNP}^{n+1} + B}}\Big( -\frac{Co}{Pe}\xi^{n+1}\sum\limits_{i}\|\sqrt{ c_{ih}^{n+1}}\nabla g_{ih}^{n+1}\|^2 \nonumber\\
&+Co (\sum\limits_{i}z_i c_{ih}^{n+1}(\tilde{\mathbf{u}}_{1h}^{n+1} + \xi^{n+1}\tilde{\mathbf{u}}_{2h}^{n+1}) ,\nabla \bar{V}_h^{n+1} ) + ((\mathbf{u}_{h}^{*}\cdot\nabla)\mathbf{u}_{h}^{*}, (\tilde{\mathbf{u}}_{1h}^{n+1} + \xi^{n+1}\tilde{\mathbf{u}}_{2h}^{n+1})   ) \Big).
\end{align}
According to $\xi^{n+1}$, we can derive formula \eqref{xixi}. Then we verify that $\xi^{n+1}$ is solvable by showing $\sqrt{\bar{E}_{SPNP}^{n+1}+ B} + \Delta t \zeta_{2}\neq 0$. Taking $\mathbf{v}_h=\tilde{\bm{u}}_{2h}^{n+1}$ in \eqref{decoupled-numerical-NS2}, we have
\begin{align}
& - ((\mathbf{u}_{h}^{*}\cdot\nabla)\mathbf{u}_{h}^{*},\tilde{\bm{u}}_{2h}^{n+1} ) - Co(\sum\limits_{i}z_ic_{ih}^{n+1}\nabla \bar{V}_{h}^{n+1},\tilde{\bm{u}}_{2h}^{n+1} )= \frac{3}{2\Delta t}\|{\tilde{\bm{u}}}_{2h}^{n+1}\|^2 +\frac{ 1}{Re} \|\sqrt{2\mu_{ph}^{*}}\mathbb{D}\tilde{\mathbf{u}}^{n+1}_{2h}\|_F^2 \geq 0.
\end{align}
Therefore,
\begin{align*}
\zeta_{2} =& \frac{1}{2\sqrt{\bar{E}_{SPNP}^{n+1} + B}}\Big(\frac{Co}{Pe}\sum\limits_{i}\|\sqrt{ c_{ih}^{n+1}}\nabla g_{ih}^{n+1}\|^2 + \frac{3}{2\Delta t}\|{\tilde{\bm{u}}}_{2h}^{n+1}\|^2 +\frac{ 1}{Re} \|\sqrt{2\mu_{ph}^{*}}\mathbb{D}\tilde{\mathbf{u}}^{n+1}_{2h}\|_F^2 \Big)\geq 0,
\end{align*}
which implies $\sqrt{\bar{E}_{SPNP}^{n+1}+ B} + \Delta t \zeta_{2}\neq 0$.\\
\textbf{Step 6}. Update $r^{n+1}$, $V_{h}^{n+1}$ and $\tilde{\mathbf{u}}_{h}^{n+1}$ by
\begin{align*}
&r^{n+1} = \xi^{n+1}\sqrt{\bar{E}_{SPNP}^{n+1}+ B},\\
&V_{h}^{n+1} = \xi^{n+1}\bar{V}_{h}^{n+1},\\
&\tilde{\mathbf{u}}_{h}^{n+1} = \tilde{\mathbf{u}}_{1h}^{n+1}+ \xi^{n+1}\tilde{\mathbf{u}}_{1h}^{n+1}.
\end{align*}
\textbf{Step 7}: Find $\psi^{n+1}\in M_h$ such that
\begin{align*}
(\nabla \psi^{n+1}_h,\nabla q_h) = \frac{3}{2\Delta t}(\tilde{\mathbf{u}}_{h}^{n+1},\nabla q_h),\quad \forall \psi_h\in M_h .
\end{align*}
\textbf{Step 8}: Update $\mathbf{u}_h^{n+1}$, $p_h^{n+1}$, $\mu_{ph}^{n+1}$ by
\begin{align*} 
&\mathbf{u}_h^{n+1} = \tilde{\mathbf{u}}_h^{n+1} - \frac{2\Delta t}{3}\nabla \psi_h^{n+1},\\
&p^{n+1}_h=  \psi^{n+1}_h + p_h^{n},\\
& \mu_{ph}^{n+1} = \mu_{\infty} +(\mu_0 - \mu_{\infty})\Big( 1 + \lambda_1^2(2\mathbb{D}\mathbf{u}^{n+1}_{h}:\mathbb{D}\mathbf{u}^{n+1}_{h})\Big)^{\frac{k-1}{2}}.
\end{align*}

\section{Numerical results}

In this section, we first implement several numerical examples to verify the convergence and energy stability of the proposed scheme \eqref{numerical-model}. Then, some benchmark simulations are proposed to demonstrate the effectiveness of the scheme. 

In all numerical experiments below, we set the computational domain to be a rectangular domain $\Omega=[0,1]^2$ and use the Taylor-Hood element $(\bm{P}_2, P_1)$ for velocity and pressure. The $P_2$ element is also used for electric potential and ion concentrations. And we consider only two types of ions: positive ion (denoted by $i = p $) and negative ion (denoted by $ i = n $). Correspondingly, their valence numbers \( z_i \) are set to be \( z_p=1 \) and \( z_n=-1 \), respectively. Unless otherwise specified, all boundary conditions follow the default settings \eqref{BC} established in this work. The computations are performed with software Freefem $++$~\cite{hecht2012new}.

\subsection{Accuracy test}
The computational domain is assumed to be a rectangular region $\Omega=[0,1]^2$, and the terminal time is set to be $T=0.5$. We take the steric interaction coefficient matrix
$\left(
  \begin{array}{cc}
    2 & 1 \\
    1 & 2 \\
  \end{array}
\right)$.
The parameters in the model are set as $\lambda=1$, $Pe=2$, $Re=1$, $Co=5$, $k=0.5$, $\mu_0=1.0$, $\mu_\infty=0.5$, $\lambda_1=1$. Assuming the exact solutions of the system \eqref{nondimensional-model} are given as functions
\begin{equation}
\begin{split}
&c_p(x,y,t)=1.2 + \cos(\pi x)\cos(\pi y)\exp(-t), \\
&c_n(x,y,t)=1.2 - \cos(\pi x)\cos(\pi y)\exp(-t), \\
&V(x,y,t)=\frac{1}{\pi^2}\cos(\pi x)\cos(\pi y)\exp(-t),\\
&\bm{u}(x,y,t)=(\pi\sin^2(\pi x)\sin(2\pi y)\exp(-t),-\pi\sin(2\pi x)\sin^2(\pi y)\exp(-t)),\\
&p(x,y,t)=\cos(\pi x)\cos(\pi y)\exp(-t).
\end{split}
\end{equation}
Some source terms are added so that the above solutions can satisfy the \eqref{nondimensional-model}.
The mesh size in space is set to be $\sqrt{2}/256$ so that it is negligible compared to the time discretization error.
We use quadratic element for $\bm{u}, c_i, V$ and linear element for $p$.
The errors in $L^2$ norm between the numerical solutions and the exact solutions are shown
in Table \ref{tab:L2accuracy}.
It is clear to see that the order of convergence in time is approximately to be second-order.

\begin{table}[tbhp]
{\small
   \caption{Temporal convergence for the velocity $\bm{u}$, pressure $p$, concentration $c_p$, $c_n$ and electric potential $V$ by using the $L^2$ norm. \label{tab:L2accuracy}}
  \begin{center}
     \begin{tabular}{llllllllll}
    \hline
       $N$      & $\|e_{\bm{u}}\|_2$   &order   &$\|e_{p}\|_2$    &order   &$\|e_{c_p}\|_2$   &order    \\
    \hline
    $16$         & 4.5707e-04         &-    & 1.2785e-02           & -  &   8.8518e-05           &-
    \\
    $32$ &1.1072e-04       &2.05   & 3.1288e-03            &  2.03  &  1.4312e-05           &2.63
    \\
    $64$ & 2.7404e-05      &2.01   & 7.7970e-04           &  2.00  &  2.2599e-06              &2.67
    \\
    $128$ & 6.8274e-06      &2.00   &  1.9952e-04           &  1.97 &  4.5885e-07             &  2.30
    \\
     \hline
     $N$       &$\|e_{c_n}\|_2$     &order  &$\|e_{V}\|_2$    &order    \\
    \hline
    $16$
    &  5.2692e-05       & -&7.0894e-06        & - \\
    $32$
    & 1.0068e-05       &  2.39& 1.2183e-06        & 2.54     \\
    $64$
    &  1.9913e-06       & 2.34& 2.1130e-07       & 2.53 \\
    $128$
    &   4.5245e-07       & 2.14&4.5142e-08        & 2.23     \\
    \hline
     \end{tabular}
  \end{center}
}
\end{table}

\subsection{The energy dissipation and mass conservation}
To show that the numerical scheme satisfies the discrete energy law and mass conservation, the Coulomb-driven flow in a cavity is considered. The terminal time is set to be $T=2$. We take the steric interaction coefficient matrix
$W=\left(
  \begin{array}{cc}
    2 & 0 \\
    0 & 2 \\
  \end{array}\right)$.
The parameters are set as $\lambda=0.2$, $Pe=50$, $Re=1$, $Co=0.6$, $k=0.2$, $\mu_0=1.5$, $\mu_\infty=0.5$, $\lambda_1=0.1$. The spatial mesh size is chosen to be $h=\frac{\sqrt{2}}{40}$.
The initial conditions are taken to be
\begin{equation}\label{eq:EnergydecayInitial}
\begin{split}
&c_{p0}=12 + 10\cos(\pi x)\cos(\pi y), \\
&c_{n0}=12 - 10\cos(\pi x)\cos(\pi y), \\
&\bm{u}_{0}=(0,0),
\end{split}
\end{equation}
where the initial condition for the electric potential satisfies $\int_{K_h} V d\bm{x} =0$. In Fig.~\ref{TimeDecaylaww}, we present the time evolution of the discrete total energy and variable $\xi$ for different time step $\Delta t$, one can see that the numerical scheme dissipates the total energy which is consistent with the theoretical analysis.
Fig.~\ref{TimeDecaylaw} depicts the values of the auxiliary variable $\xi$ with diverse time step sizes. One can see that the numerical solutions of the variable is approximately equal to 1. Fig.~\ref{MMassConservation} plots time evolution of the discrete masses $\int_{\Omega} c_p d\mathbf{x}$ and $\int_{\Omega} c_n d\mathbf{x}$, which demonstrates the mass conservation property of the scheme. 

\begin{figure}[htb]
\setlength{\abovecaptionskip}{0pt}
\setlength{\belowcaptionskip}{3pt}
\renewcommand*{\figurename}{Fig.}
\centering
\begin{minipage}[t]{0.41\linewidth}
\includegraphics[width=2.5in, height=2.2 in]{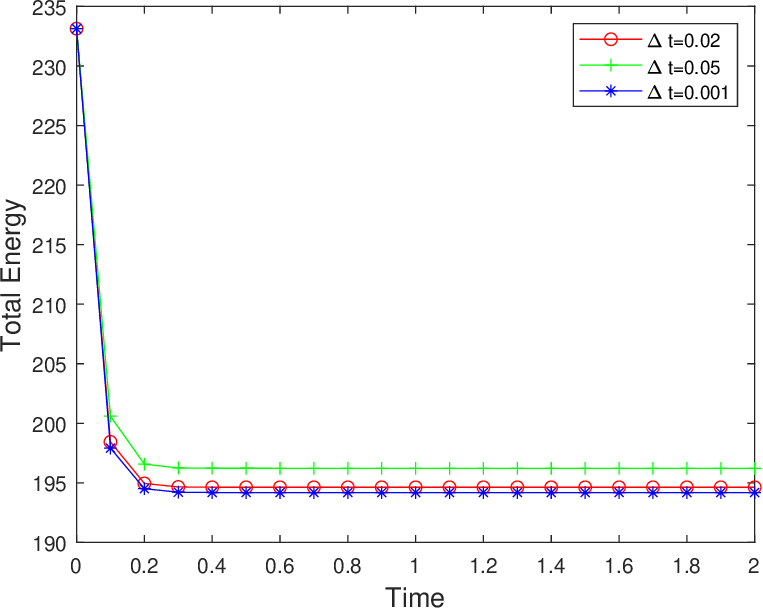}
\end{minipage}%
\caption{ Time evolution of the discrete total energy .\label{TimeDecaylaww}}
\end{figure}
\begin{figure}[htb]
\setlength{\abovecaptionskip}{0pt}
\setlength{\belowcaptionskip}{3pt}
\renewcommand*{\figurename}{Fig.}
\centering
\begin{minipage}[t]{0.41\linewidth}
\includegraphics[width=2.5in, height=2.2 in]{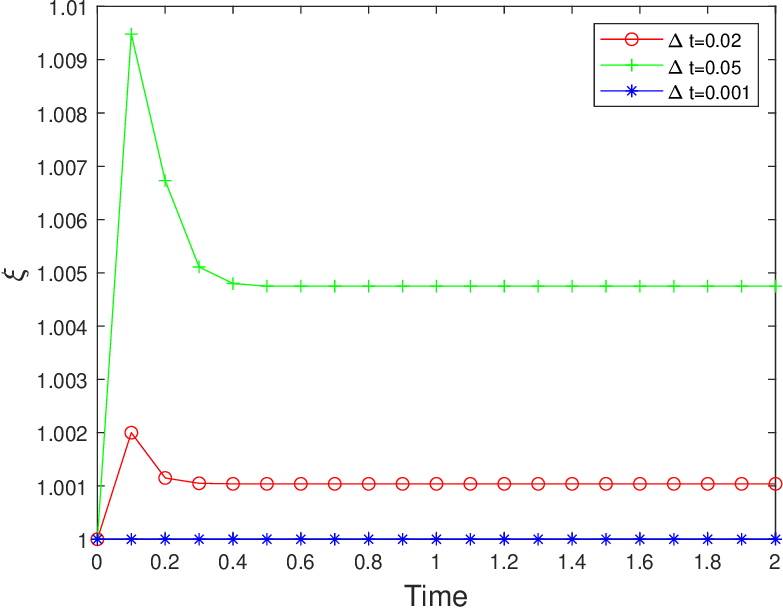}
\end{minipage}%
\caption{ Time evolution of the auxiliary variable $\xi$.\label{TimeDecaylaw}}
\end{figure}

\begin{figure}[htbp]
\setlength{\abovecaptionskip}{1pt}
\renewcommand*{\figurename}{Fig.}
\centering
\begin{minipage}[t]{0.41\linewidth}
\includegraphics[width=2.15in, height=1.8in]{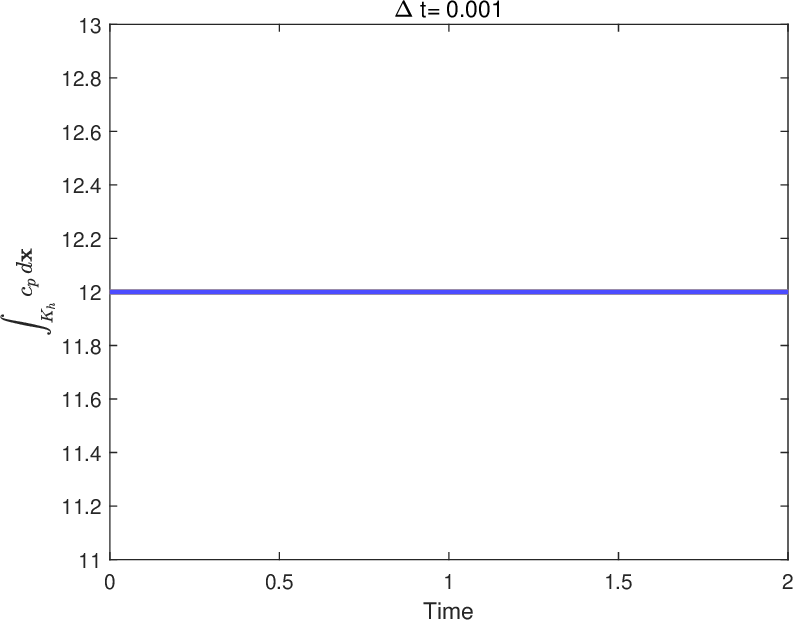}
\end{minipage}
\begin{minipage}[t]{0.41\linewidth}
\includegraphics[width=2.15in, height=1.8in]{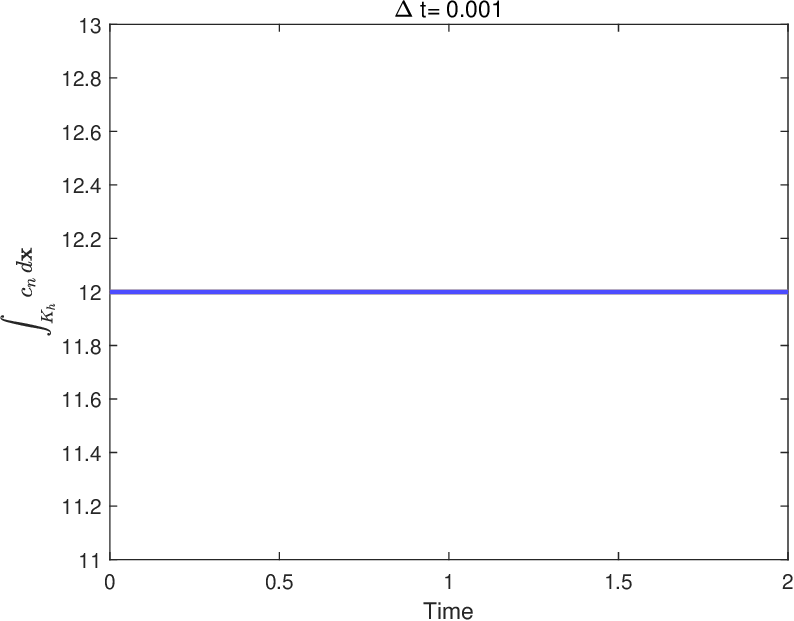}
\end{minipage}
\caption{Time evolution of the discrete masses $\int_{\Omega} c_p d\mathbf{x}$ and $\int_{\Omega} c_n d\mathbf{x}$.\label{MMassConservation}}
\end{figure}

\subsection{Steric effect}
To study the effect of steric interactions on the ion distribution, we take the steric interaction coefficient matrices  
$W = \left(
                        \begin{array}{cc}
                          0 & 0 \\
                          0 & 0 \\
                        \end{array}
                      \right)
$ , $\left(
                        \begin{array}{cc}
                          4 & 1 \\
                          1 & 4 \\
                        \end{array}
                      \right)$, $\left(
                        \begin{array}{cc}
                          8 & 1 \\
                          1 & 8 \\
                        \end{array}
                      \right)$, $\left(
                        \begin{array}{cc}
                          8 & 4 \\
                          4 & 8 \\
                        \end{array}
                      \right)$, $\left(
                        \begin{array}{cc}
                          8 & 7 \\
                          7 & 8 \\
                        \end{array}
                      \right)$. 
The initial conditions are given as follows
$$
\begin{aligned}
& \boldsymbol{u}_0=(0,0), \\
& c_{p 0}=10^{-6}+\left(1-10^{-6}\right) \cdot H(x) \cdot H_1(y), \\
& c_{n 0}=10^{-6}+\left(1-10^{-6}\right) \cdot H(x) \cdot H_2(y),
\end{aligned}
$$
where
$$
\begin{aligned}
& H(x)=\frac{1}{2}\left[1+\tanh \left(\frac{x-0.75}{0.04}\right)\right], \\
& H_1(y)=\frac{1}{2}\left[1+\tanh \left(\frac{y-0.55}{0.04}\right)\right], \\
& H_2(y)=\frac{1}{2}\left[1+\tanh \left(\frac{0.45-y}{0.04}\right)\right].
\end{aligned}
$$
We set the computed domain to be $\Omega=[0,1]^2$. In the computations, the parameters are set as $\lambda=0.1$, $Pe=50$, $Re=5$, $Co=5$, $k=0.5$, $\mu_0=1.0$, $\mu_{\infty}=0.5$, $\lambda_1=1$, $\Delta t=0.00$1, $h=\frac{\sqrt{2}}{40}$. The snapshots of $c_p$, $c_n$ and $V$ at $t = 0.002$, $t = 0.1$ and $t = 1$ for various $W$ are shown in Fig. \ref{StericEx1} - Fig. \ref{StericEx4}, respectively. With increasing diagonal entries in $W$, enhanced self steric interactions among same species of ions drives spatial dispersion, resulting in both broader distribution and reduced peak concentration as ions maximize the distances between same species of ions. As the off-diagonal entries in $W$ increases, stronger cross steric interactions between different species of ions cause the peak concentration to increase and the non-zero concentration region to shrink.

\begin{figure}[htb]
\setlength{\abovecaptionskip}{0pt}
\setlength{\abovecaptionskip}{0pt}
\setlength{\belowcaptionskip}{3pt}
\renewcommand*{\figurename}{Fig.}
\centering
\begin{minipage}[t]{0.29\linewidth}
\includegraphics[width=1.8in, height=1.6 in]{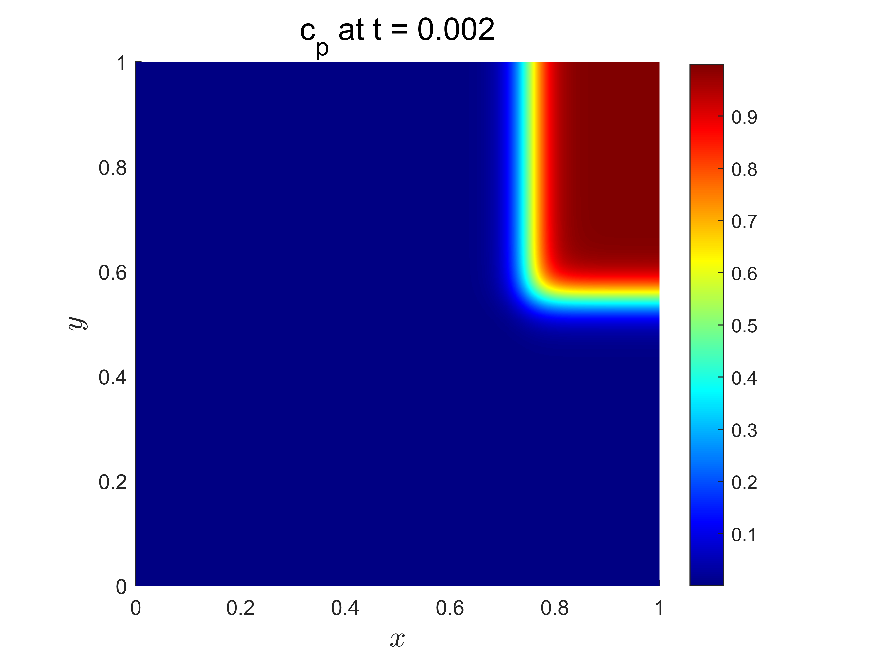}
\end{minipage}
\begin{minipage}[t]{0.29\linewidth}
\includegraphics[width=1.8in, height=1.6 in]{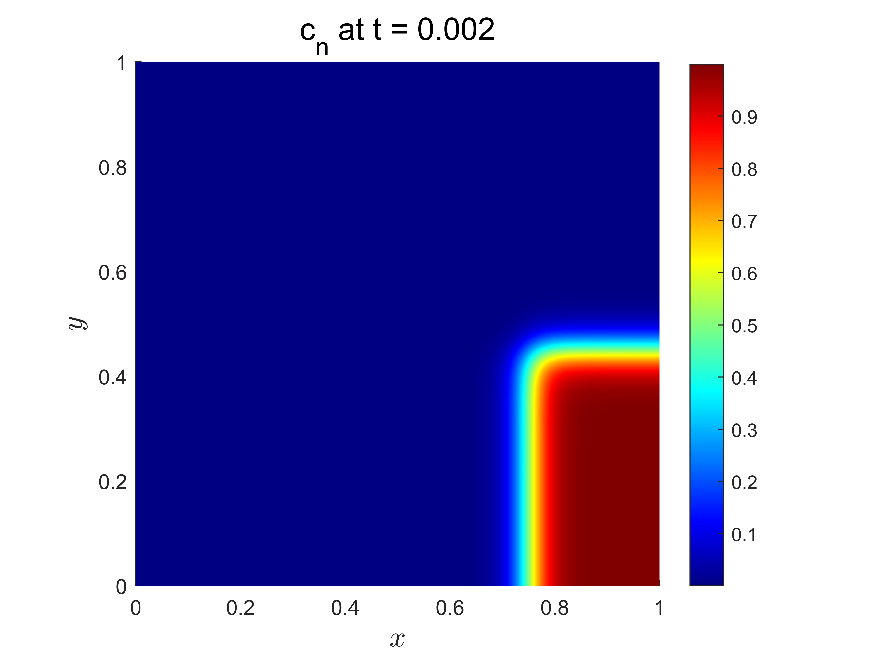}
\end{minipage}
\begin{minipage}[t]{0.29\linewidth}
\includegraphics[width=1.8in, height=1.6 in]{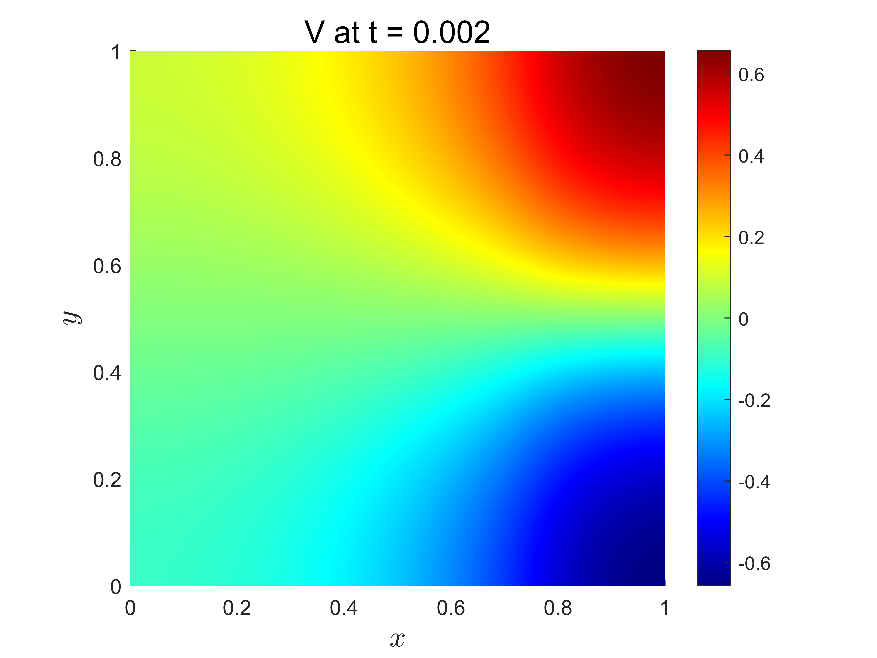}
\end{minipage}
\\
\begin{minipage}[t]{0.29\linewidth}
\includegraphics[width=1.8in, height=1.6in]{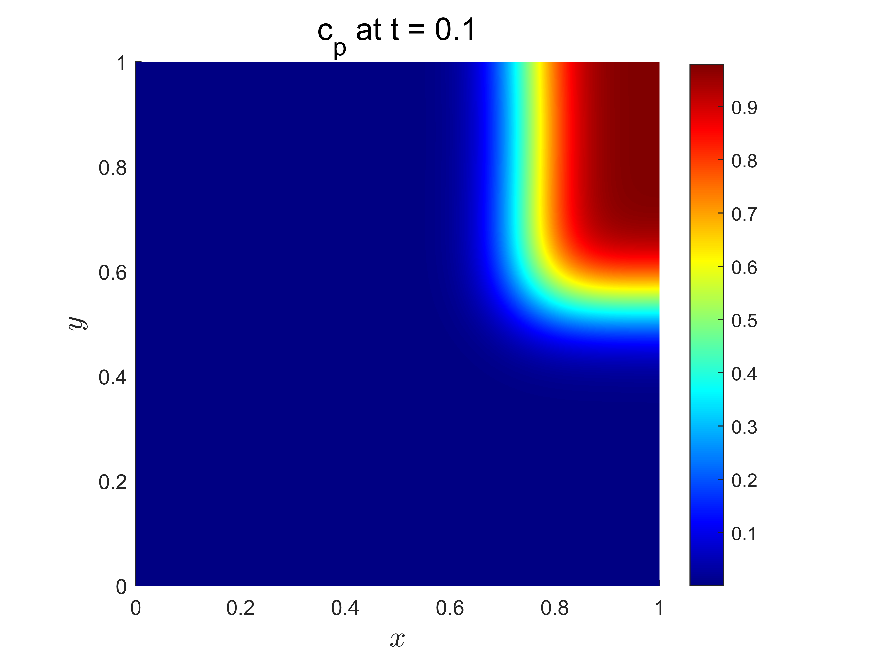}
\end{minipage}
\begin{minipage}[t]{0.29\linewidth}
\includegraphics[width=1.8in, height=1.6in]{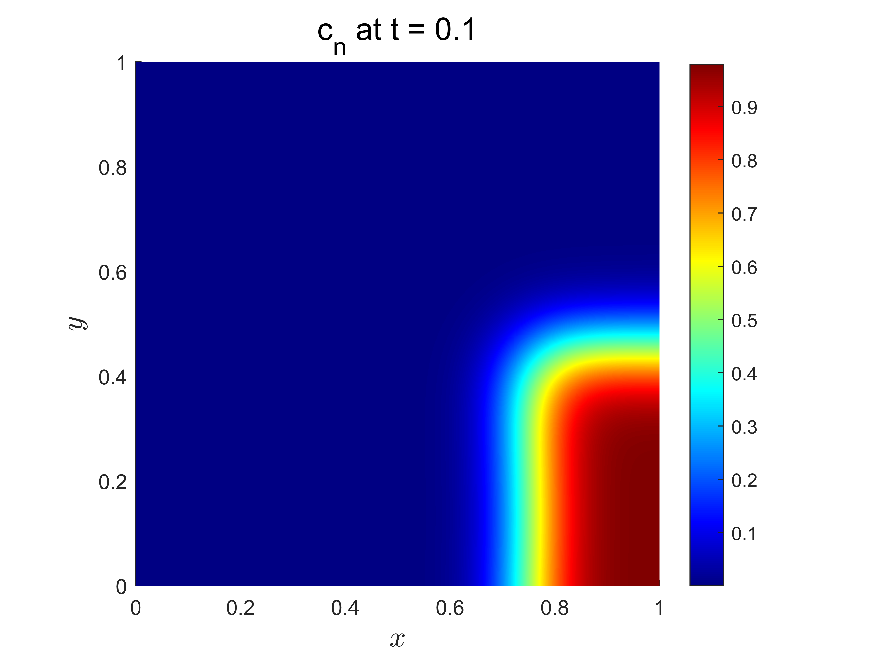}
\end{minipage}
\begin{minipage}[t]{0.29\linewidth}
\includegraphics[width=1.8in, height=1.6in]{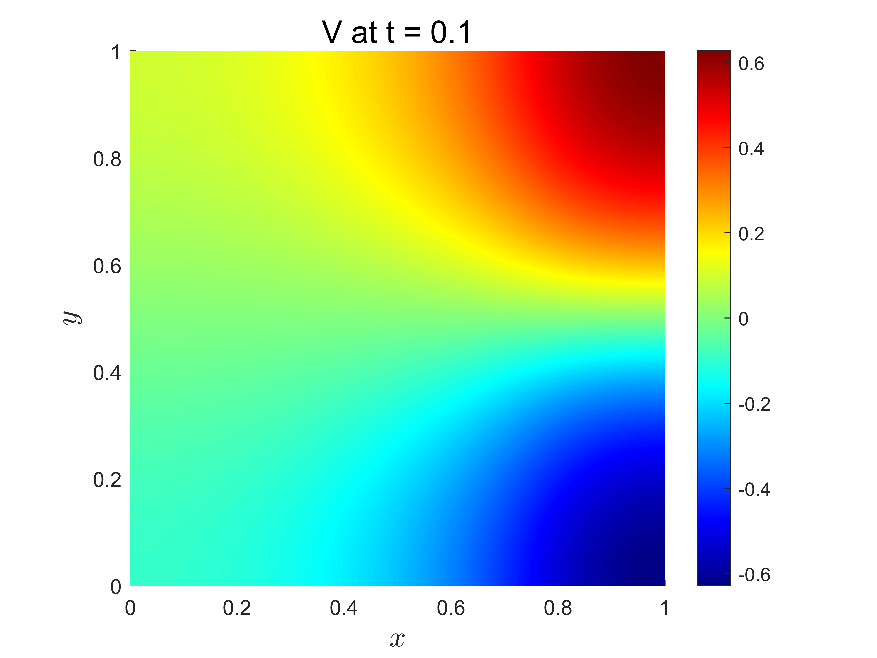}
\end{minipage}
\\
\begin{minipage}[t]{0.29\linewidth}
\includegraphics[width=1.8in, height=1.6in]{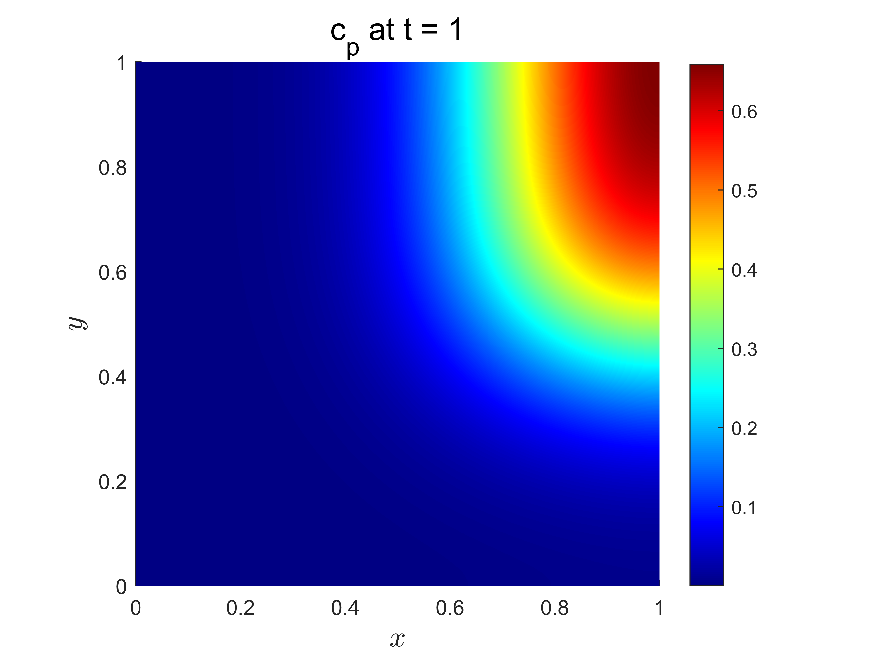}
\end{minipage}
\begin{minipage}[t]{0.29\linewidth}
\includegraphics[width=1.8in, height=1.6 in]{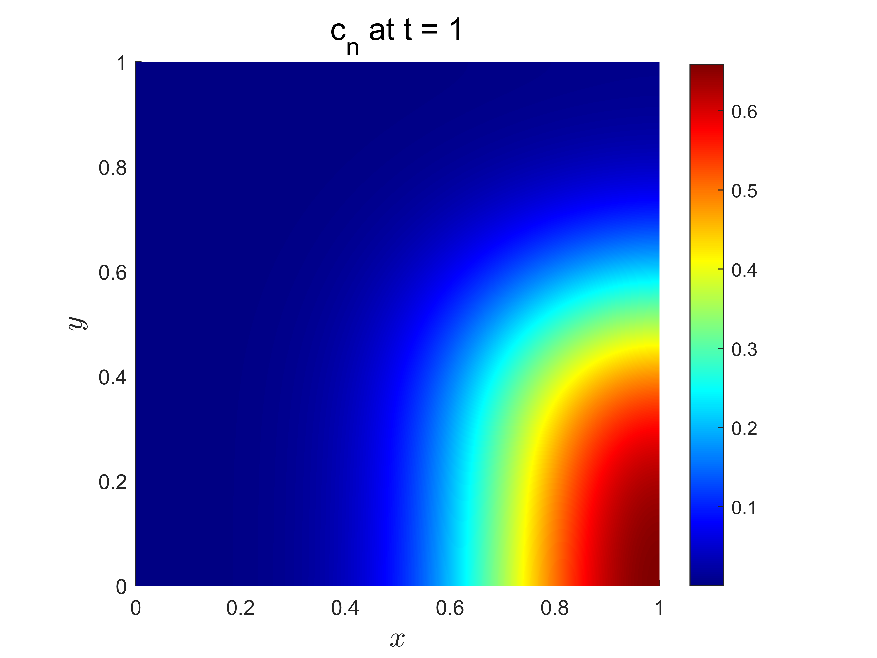}
\end{minipage}
\begin{minipage}[t]{0.29\linewidth}
\includegraphics[width=1.8in, height=1.6in]{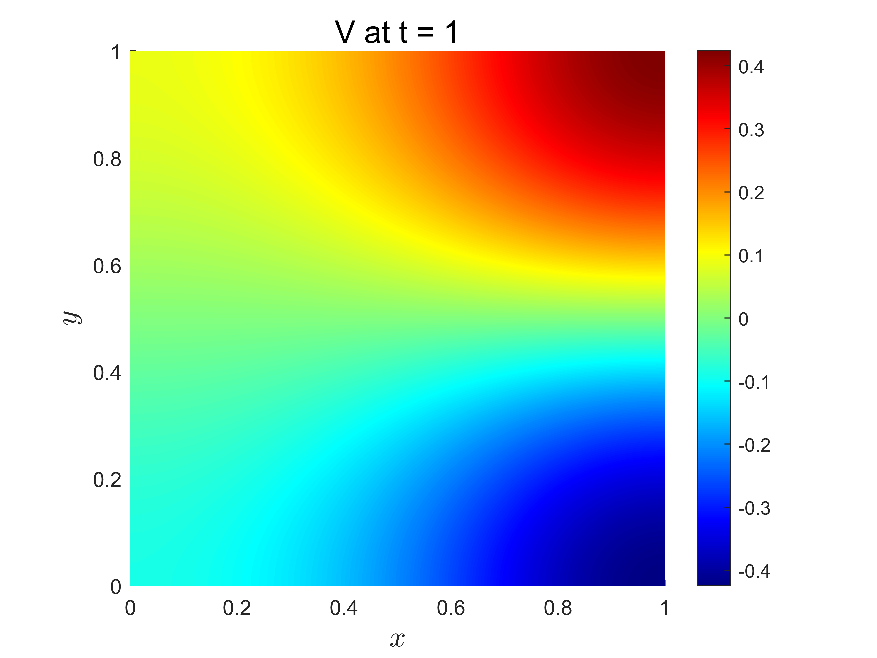}
\end{minipage}
\caption{Snapshots of $c_p$, $c_n$ and $V$ for $\omega_{11}=\omega_{22}=0$, $\omega_{12}=\omega_{21}=0$ at $t = 0.002$, $t = 0.1$ and $t = 1$.\label{StericEx1}}
\end{figure}

\begin{figure}[htb]
\setlength{\abovecaptionskip}{0pt}
\setlength{\abovecaptionskip}{0pt}
\setlength{\belowcaptionskip}{3pt}
\renewcommand*{\figurename}{Fig.}
\centering
\begin{minipage}[t]{0.29\linewidth}
\includegraphics[width=1.8in, height=1.6 in]{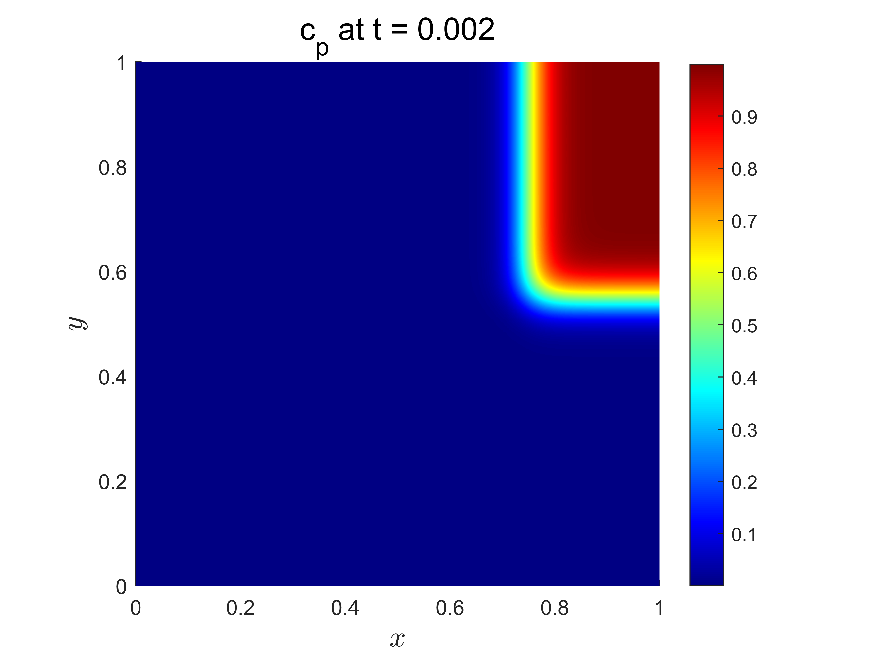}
\end{minipage}
\begin{minipage}[t]{0.29\linewidth}
\includegraphics[width=1.8in, height=1.6 in]{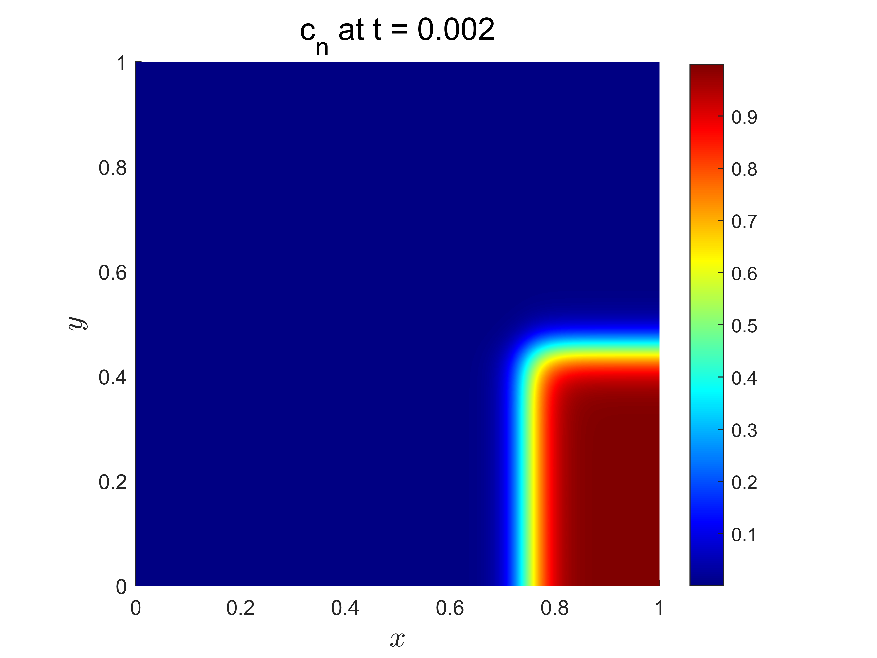}
\end{minipage}
\begin{minipage}[t]{0.29\linewidth}
\includegraphics[width=1.8in, height=1.6 in]{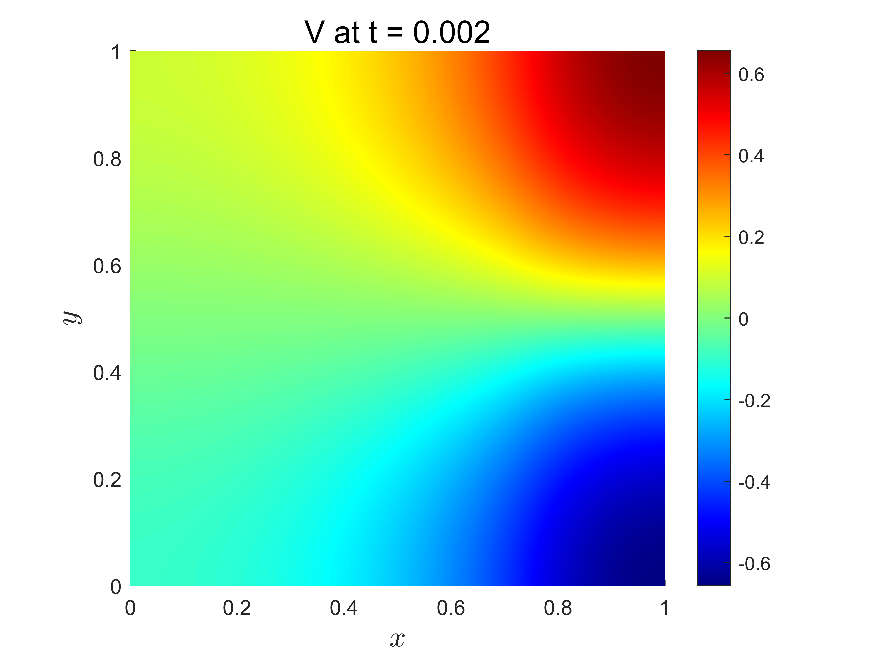}
\end{minipage}
\\
\begin{minipage}[t]{0.29\linewidth}
\includegraphics[width=1.8in, height=1.6 in]{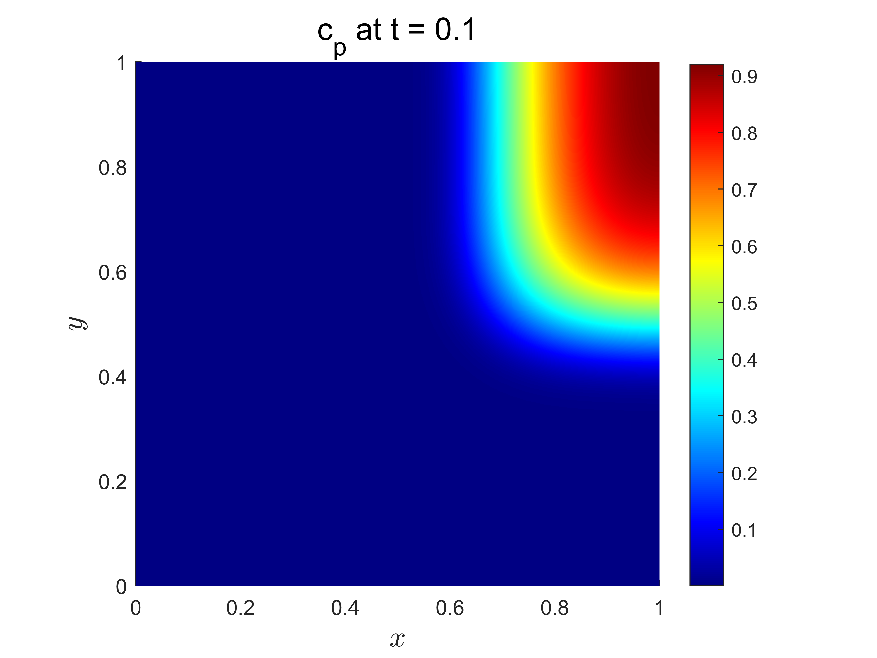}
\end{minipage}
\begin{minipage}[t]{0.29\linewidth}
\includegraphics[width=1.8in, height=1.6 in]{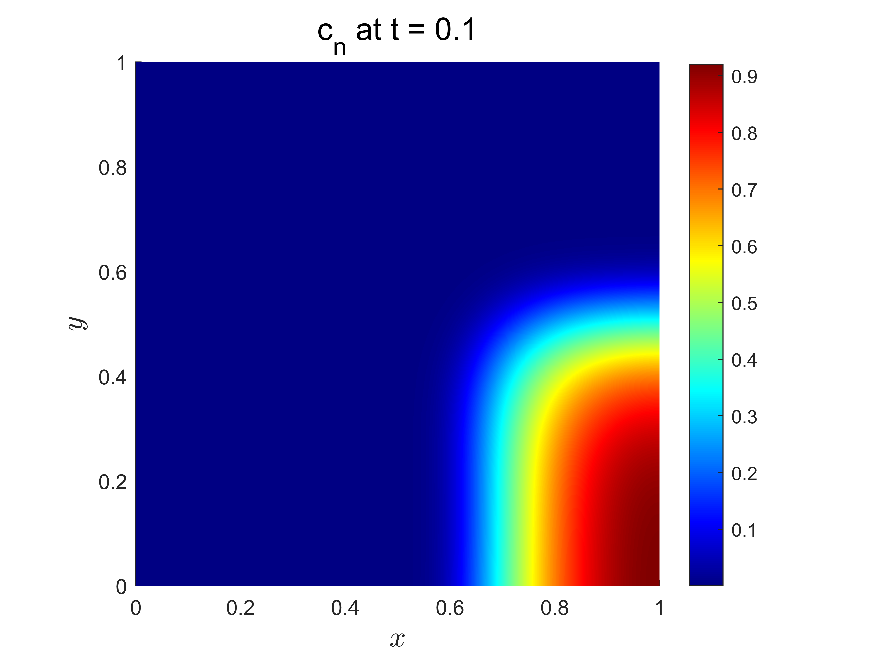}
\end{minipage}
\begin{minipage}[t]{0.29\linewidth}
\includegraphics[width=1.8in, height=1.6 in]{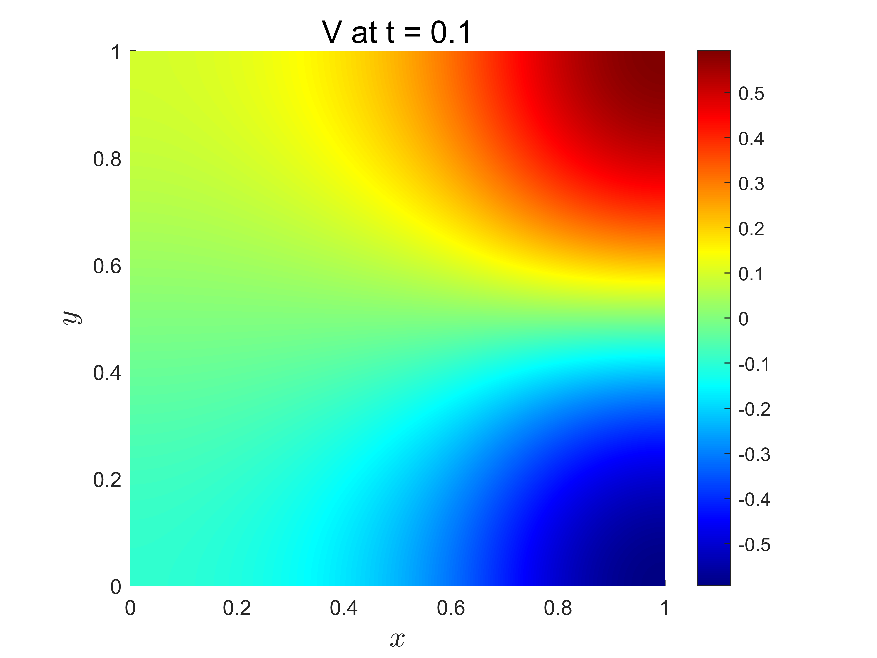}
\end{minipage}
\\
\begin{minipage}[t]{0.29\linewidth}
\includegraphics[width=1.8in, height=1.6 in]{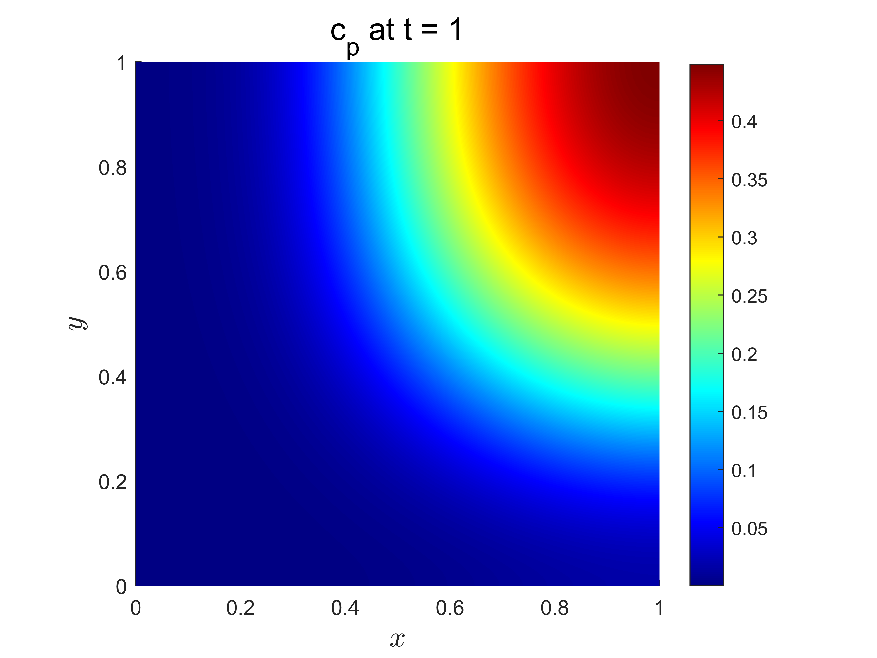}
\end{minipage}
\begin{minipage}[t]{0.29\linewidth}
\includegraphics[width=1.8in, height=1.6 in]{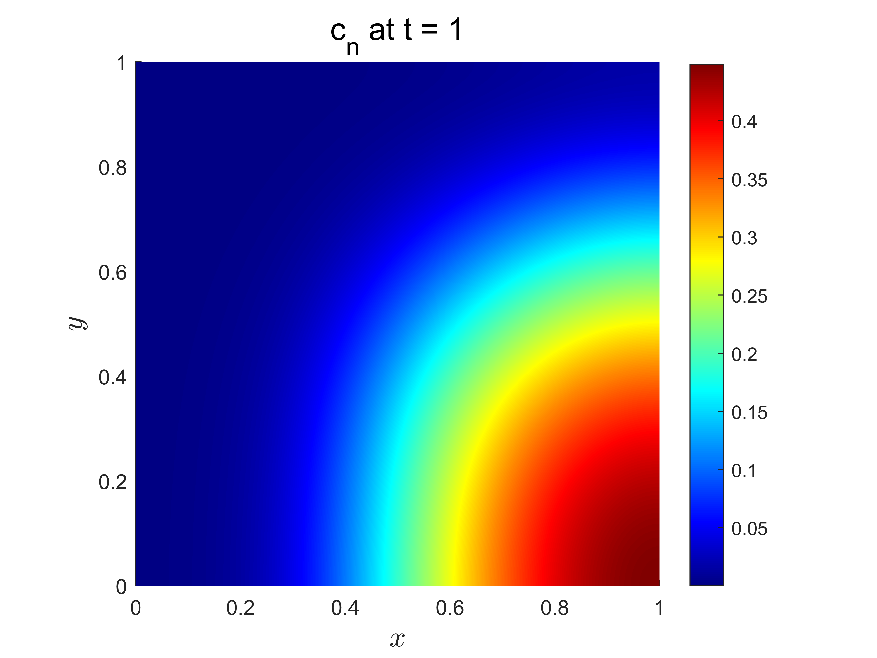}
\end{minipage}
\begin{minipage}[t]{0.29\linewidth}
\includegraphics[width=1.8in, height=1.6 in]{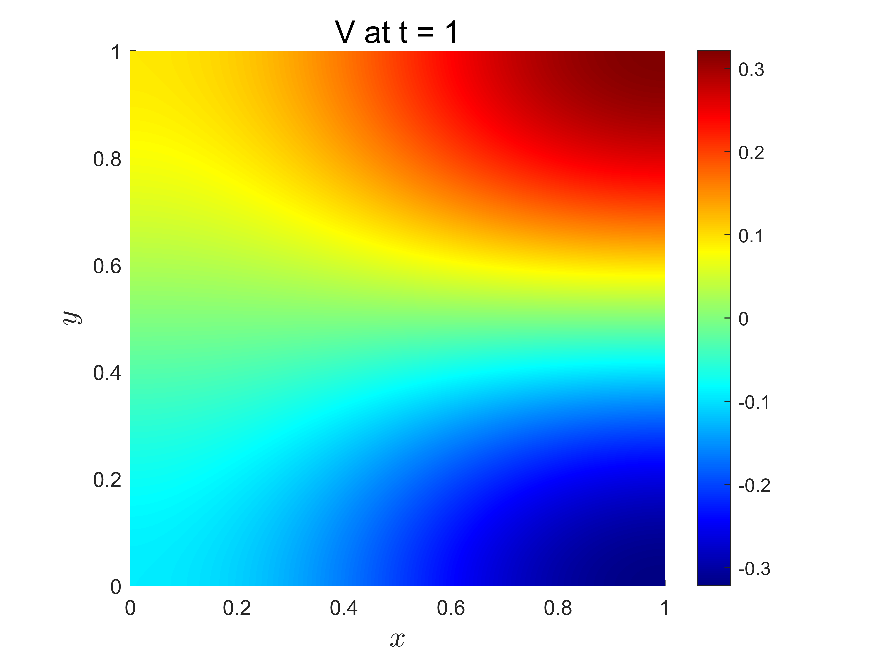}
\end{minipage}
\caption{Snapshots of $c_p$, $c_n$ and $V$ for $\omega_{11}=\omega_{22}=4$, $\omega_{12}=\omega_{21}=1$ at $t = 0.002$, $t = 0.1$ and $t = 1$.\label{StericEx2}}
\end{figure}

\begin{figure}[htb]
\setlength{\abovecaptionskip}{0pt}
\setlength{\abovecaptionskip}{0pt}
\setlength{\belowcaptionskip}{3pt}
\renewcommand*{\figurename}{Fig.}
\centering
\begin{minipage}[t]{0.29\linewidth}
\includegraphics[width=1.8in, height=1.6 in]{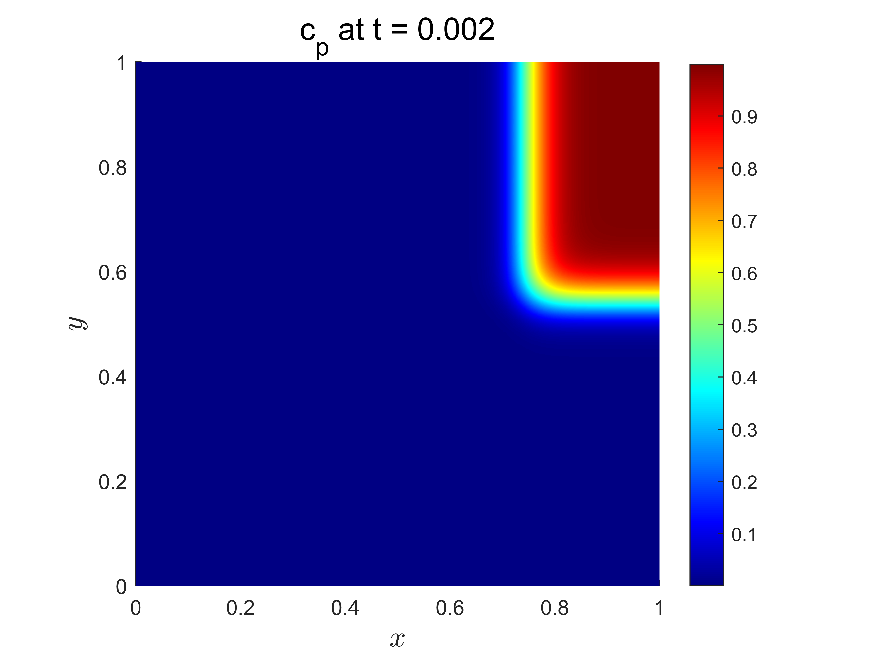}
\end{minipage}
\begin{minipage}[t]{0.29\linewidth}
\includegraphics[width=1.8in, height=1.6 in]{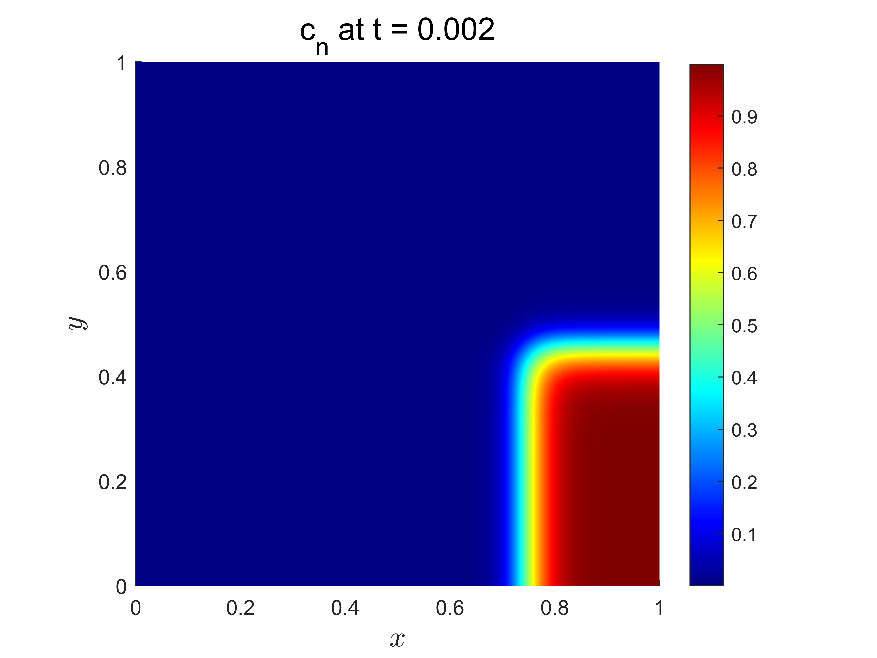}
\end{minipage}
\begin{minipage}[t]{0.29\linewidth}
\includegraphics[width=1.8in, height=1.6 in]{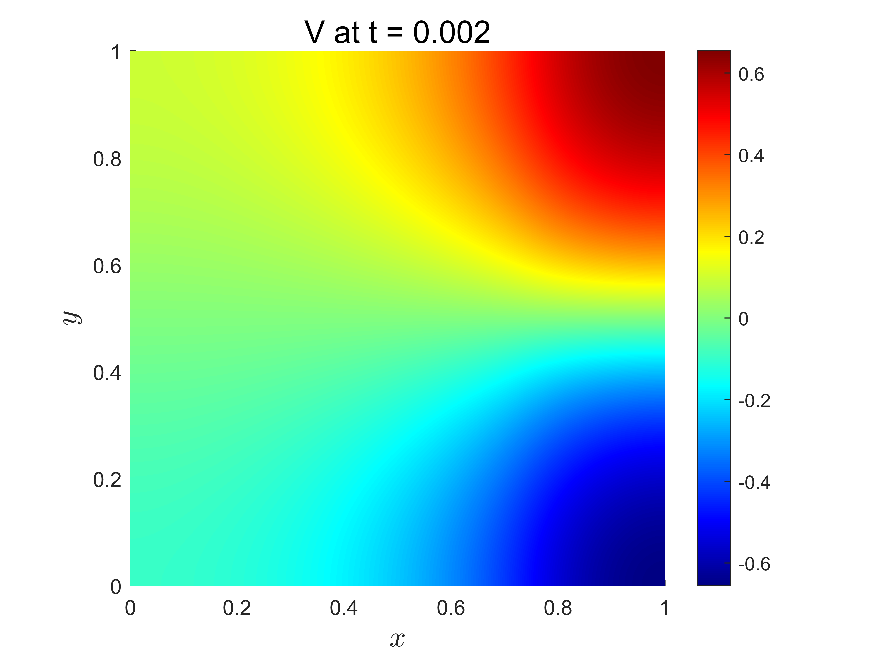}
\end{minipage}
\\
\begin{minipage}[t]{0.29\linewidth}
\includegraphics[width=1.8in, height=1.6 in]{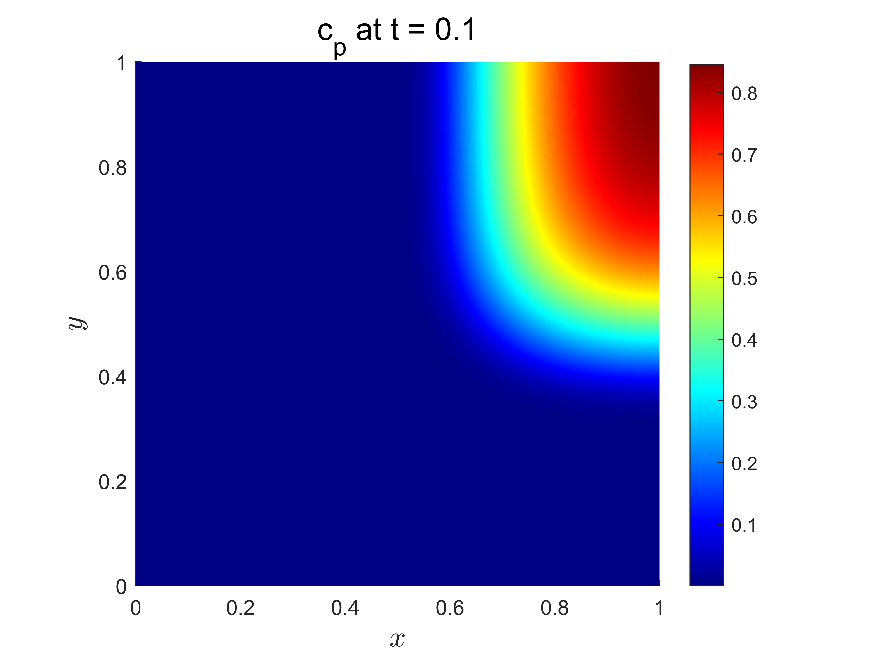}
\end{minipage}
\begin{minipage}[t]{0.29\linewidth}
\includegraphics[width=1.8in, height=1.6 in]{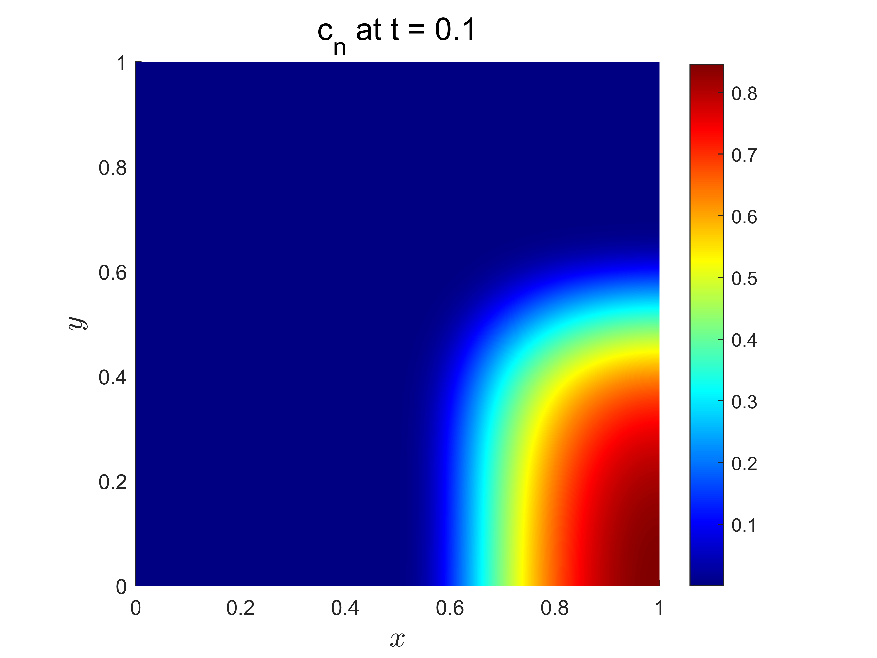}
\end{minipage}
\begin{minipage}[t]{0.29\linewidth}
\includegraphics[width=1.8in, height=1.6 in]{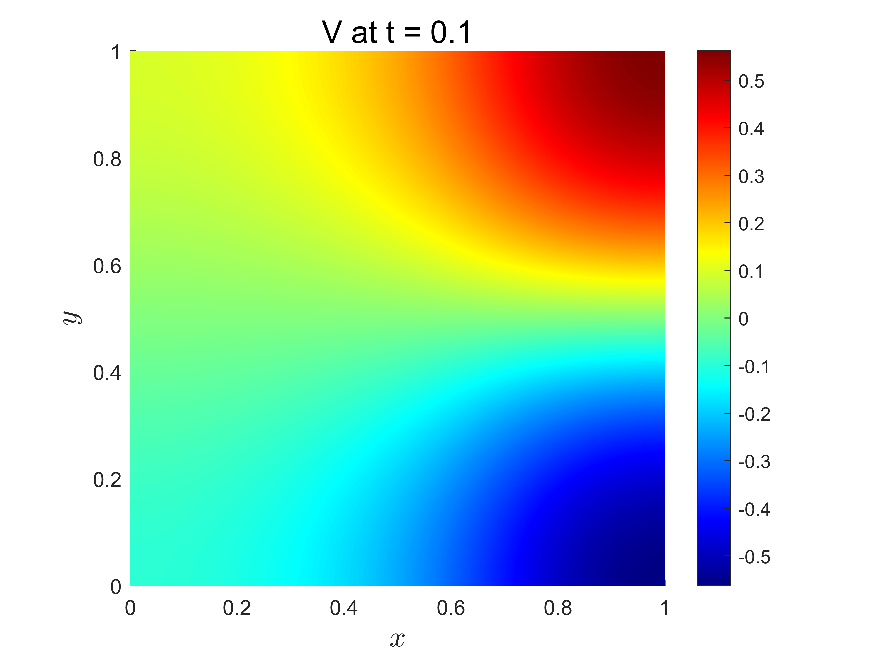}
\end{minipage}
\\
\begin{minipage}[t]{0.29\linewidth}
\includegraphics[width=1.8in, height=1.6 in]{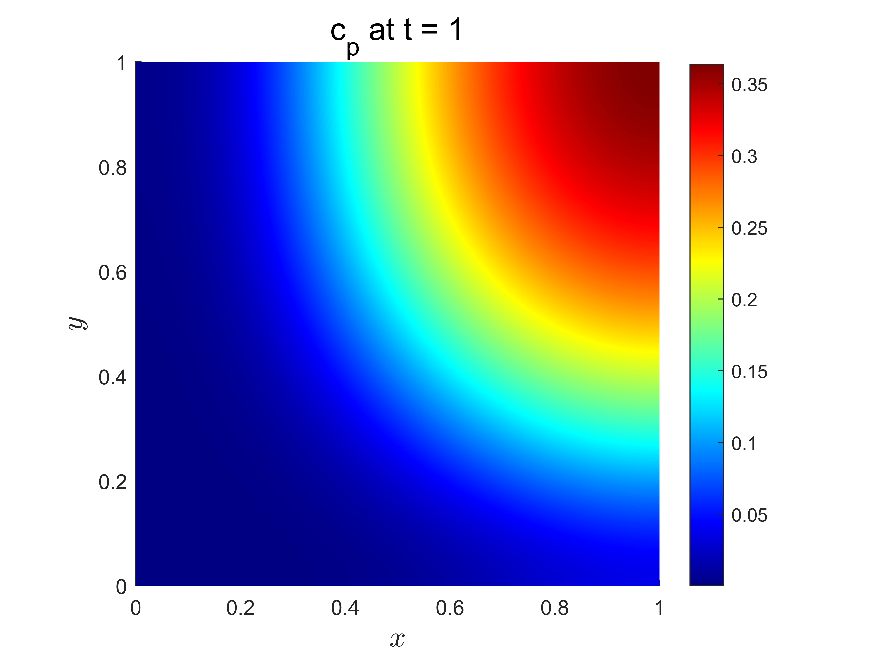}
\end{minipage}
\begin{minipage}[t]{0.29\linewidth}
\includegraphics[width=1.8in, height=1.6 in]{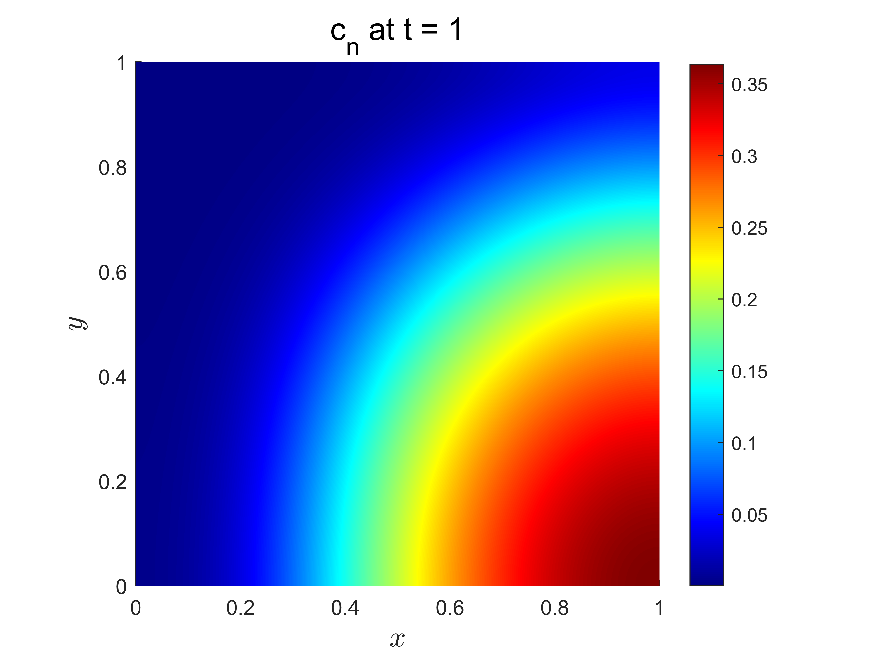}
\end{minipage}
\begin{minipage}[t]{0.29\linewidth}
\includegraphics[width=1.8in, height=1.6 in]{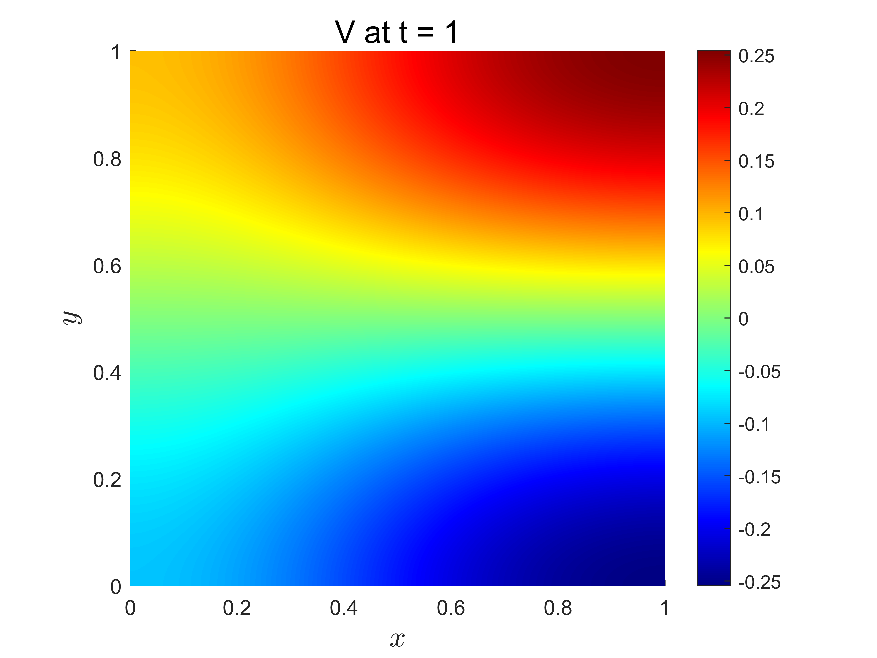}
\end{minipage}
\caption{Snapshots of $c_p$, $c_n$ and $V$ for $\omega_{11}=\omega_{22}=8$, $\omega_{12}=\omega_{21}=1$ at $t = 0.002$, $t = 0.1$ and $t = 1$.\label{StericEx3}}
\end{figure}

\begin{figure}[htb]
\setlength{\abovecaptionskip}{0pt}
\setlength{\abovecaptionskip}{0pt}
\setlength{\belowcaptionskip}{3pt}
\renewcommand*{\figurename}{Fig.}
\centering
\begin{minipage}[t]{0.29\linewidth}
\includegraphics[width=1.8in, height=1.6 in]{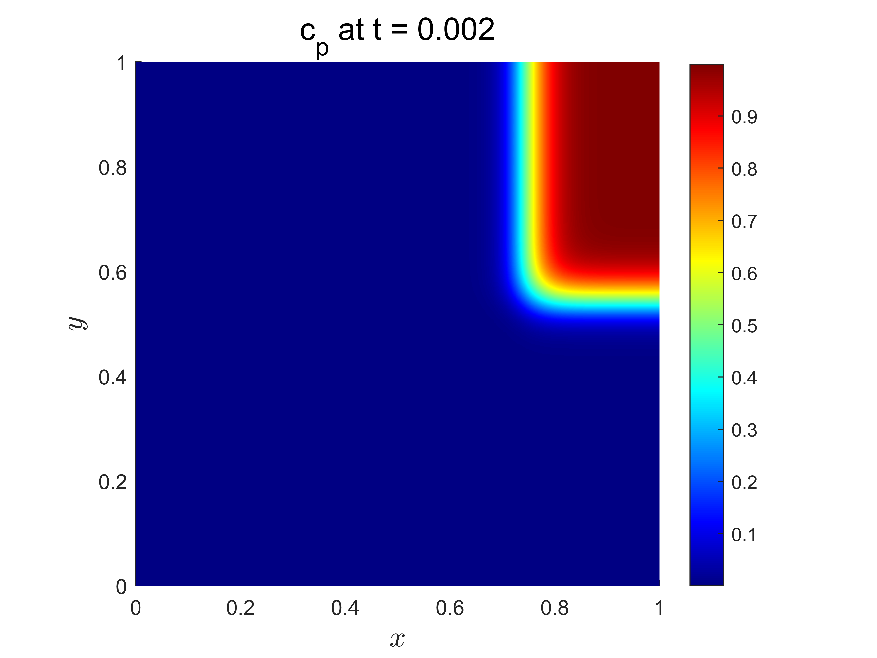}
\end{minipage}
\begin{minipage}[t]{0.29\linewidth}
\includegraphics[width=1.8in, height=1.6 in]{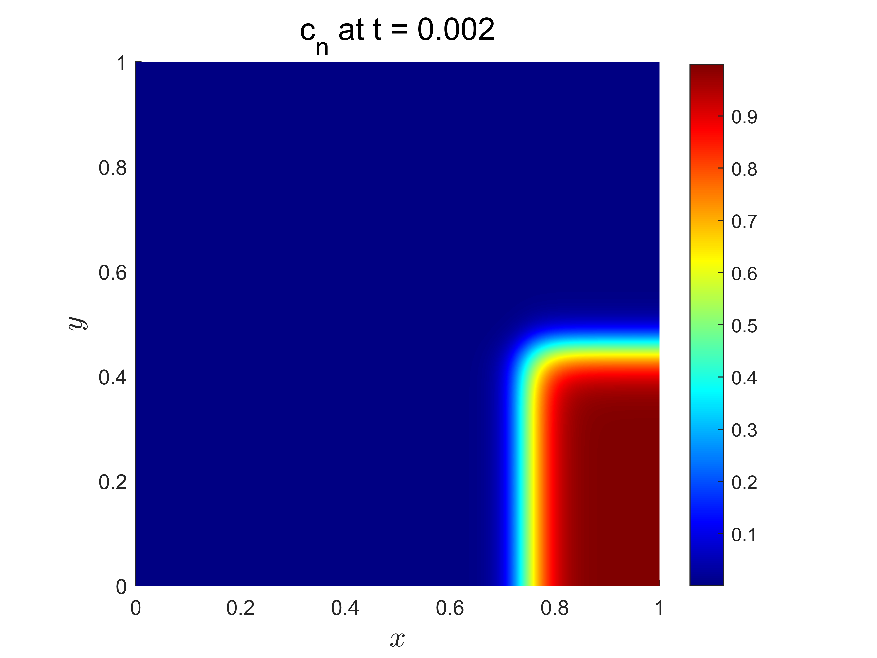}
\end{minipage}
\begin{minipage}[t]{0.29\linewidth}
\includegraphics[width=1.8in, height=1.6 in]{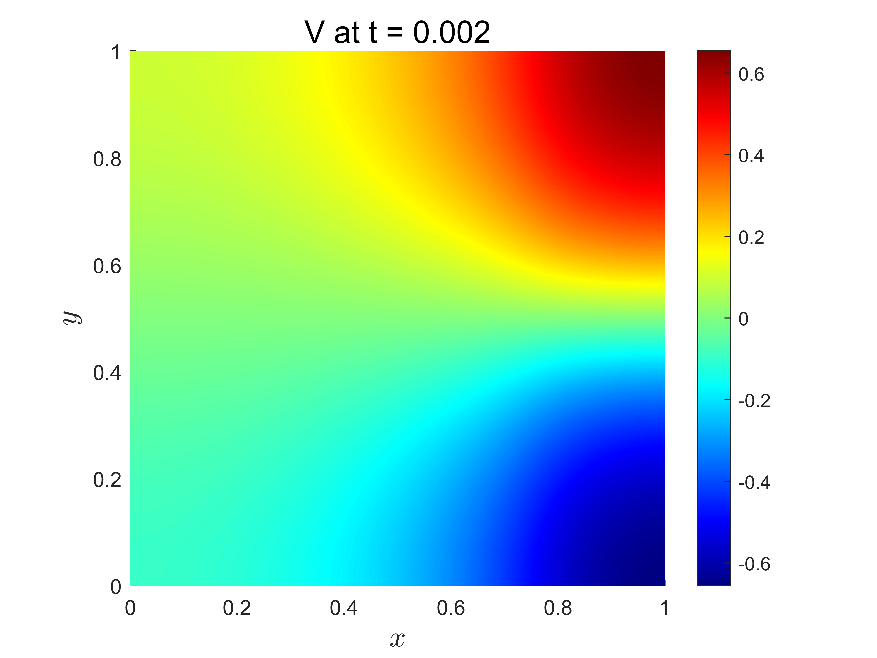}
\end{minipage}
\\
\begin{minipage}[t]{0.29\linewidth}
\includegraphics[width=1.8in, height=1.6 in]{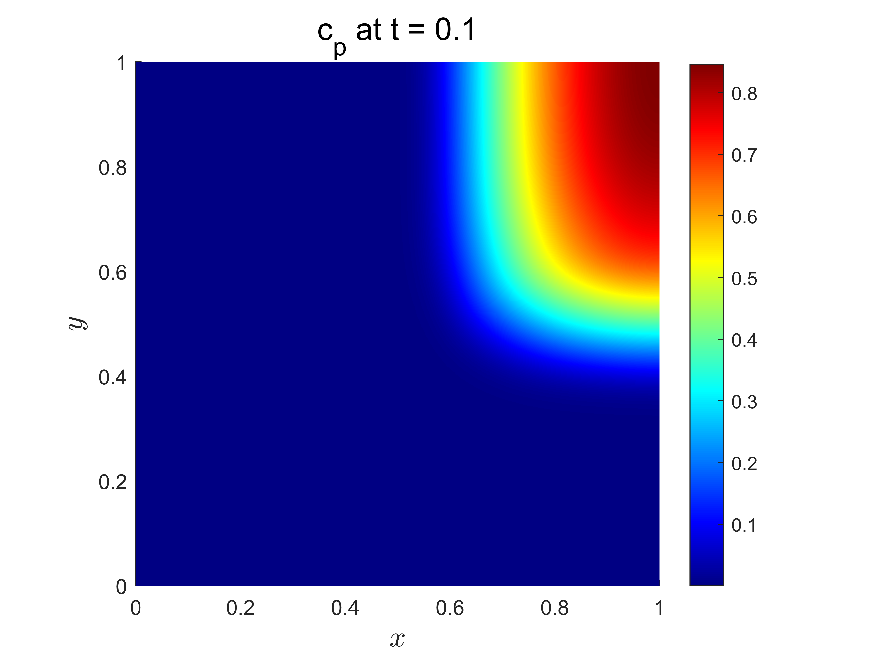}
\end{minipage}
\begin{minipage}[t]{0.29\linewidth}
\includegraphics[width=1.8in, height=1.6 in]{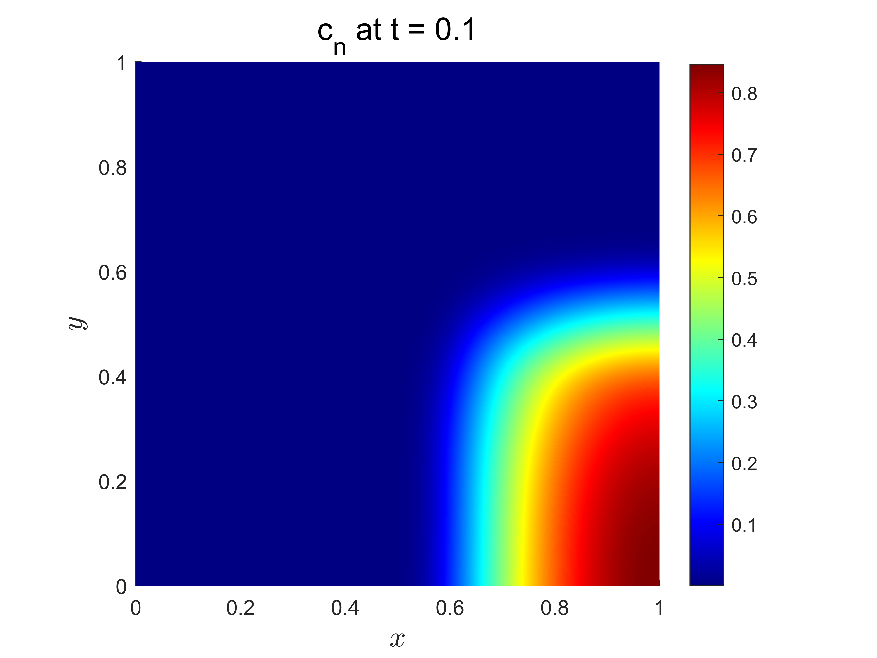}
\end{minipage}
\begin{minipage}[t]{0.29\linewidth}
\includegraphics[width=1.8in, height=1.6 in]{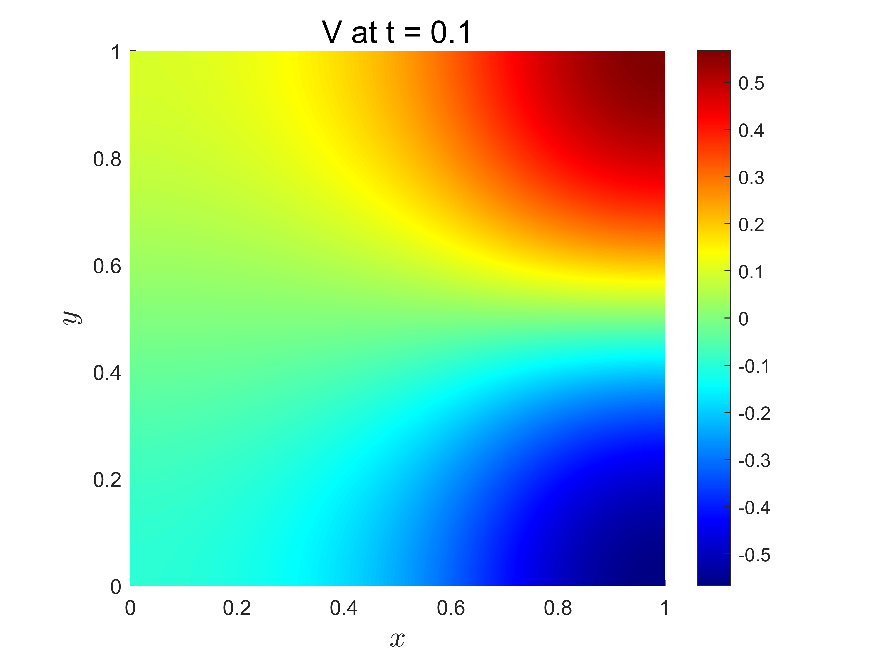}
\end{minipage}
\\
\begin{minipage}[t]{0.29\linewidth}
\includegraphics[width=1.8in, height=1.6 in]{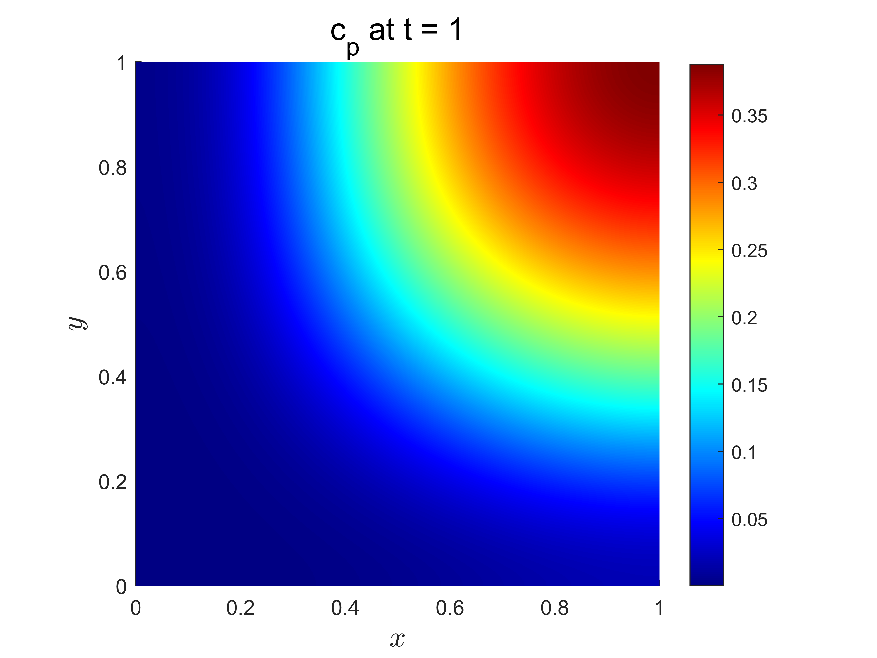}
\end{minipage}
\begin{minipage}[t]{0.29\linewidth}
\includegraphics[width=1.8in, height=1.6 in]{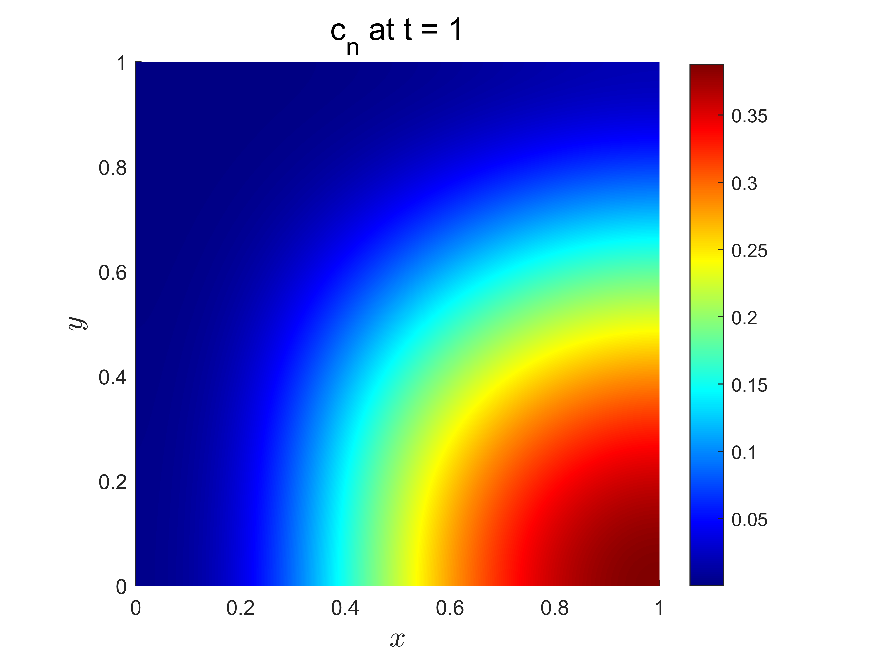}
\end{minipage}
\begin{minipage}[t]{0.29\linewidth}
\includegraphics[width=1.8in, height=1.6 in]{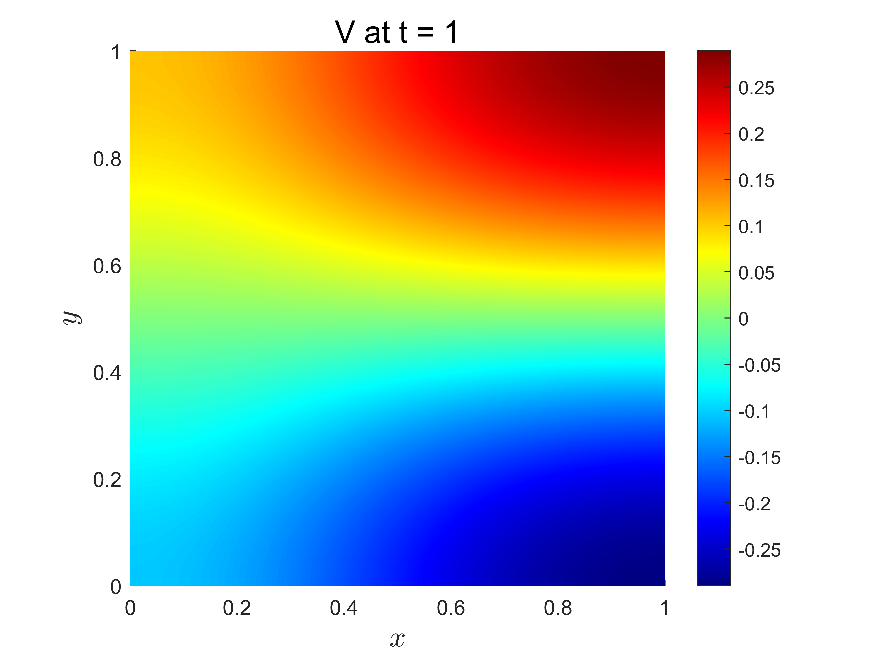}
\end{minipage}
\caption{Snapshots of $c_p$, $c_n$ and $V$ for $\omega_{11}=\omega_{22}=8$, $\omega_{12}=\omega_{21}=4$ at $t = 0.002$, $t = 0.1$ and $t = 1$.\label{StericEx4}}
\end{figure}

\begin{figure}[htb]
\setlength{\abovecaptionskip}{0pt}
\setlength{\abovecaptionskip}{0pt}
\setlength{\belowcaptionskip}{3pt}
\renewcommand*{\figurename}{Fig.}
\centering
\begin{minipage}[t]{0.29\linewidth}
\includegraphics[width=1.8in, height=1.6 in]{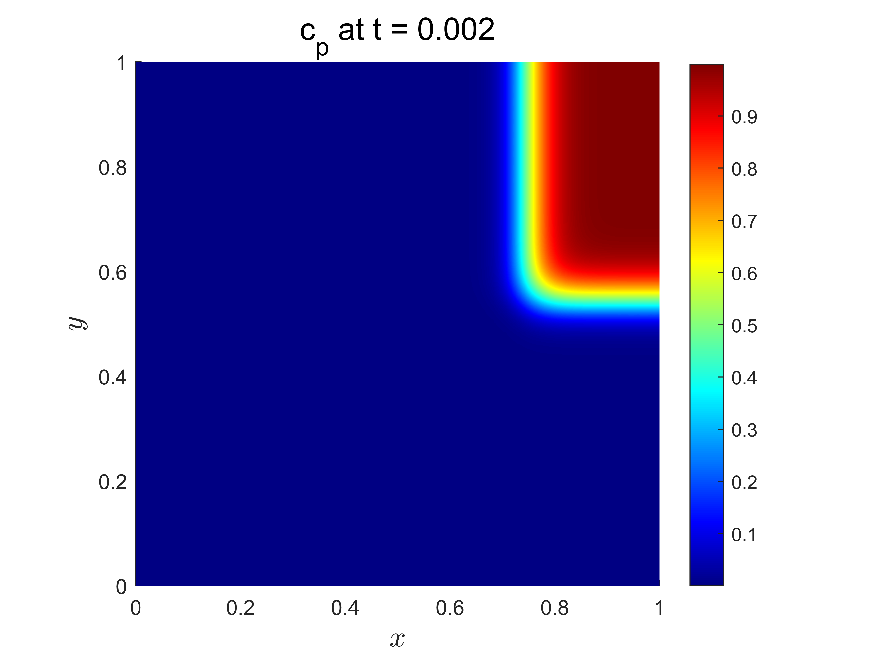}
\end{minipage}
\begin{minipage}[t]{0.29\linewidth}
\includegraphics[width=1.8in, height=1.6 in]{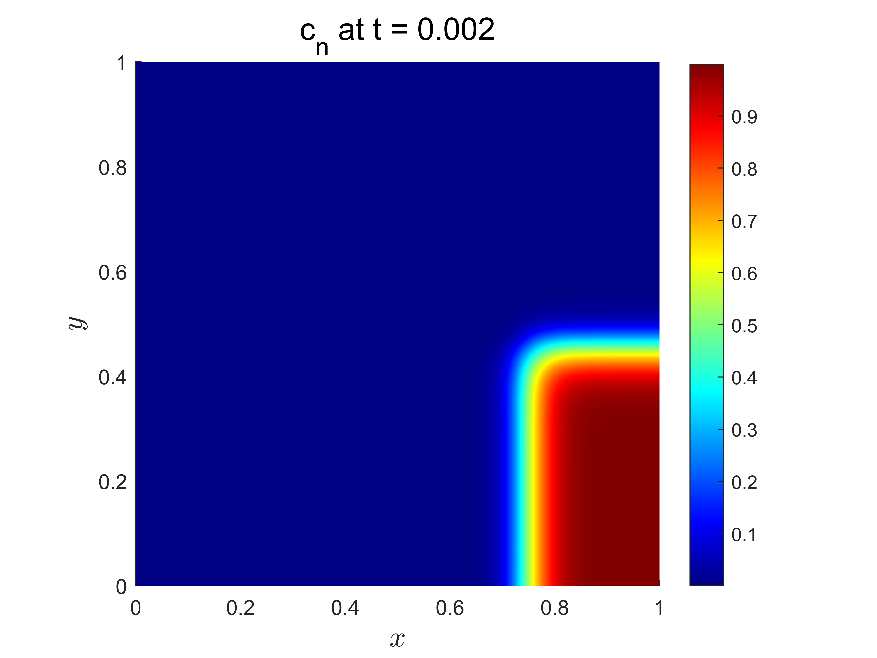}
\end{minipage}
\begin{minipage}[t]{0.29\linewidth}
\includegraphics[width=1.8in, height=1.6 in]{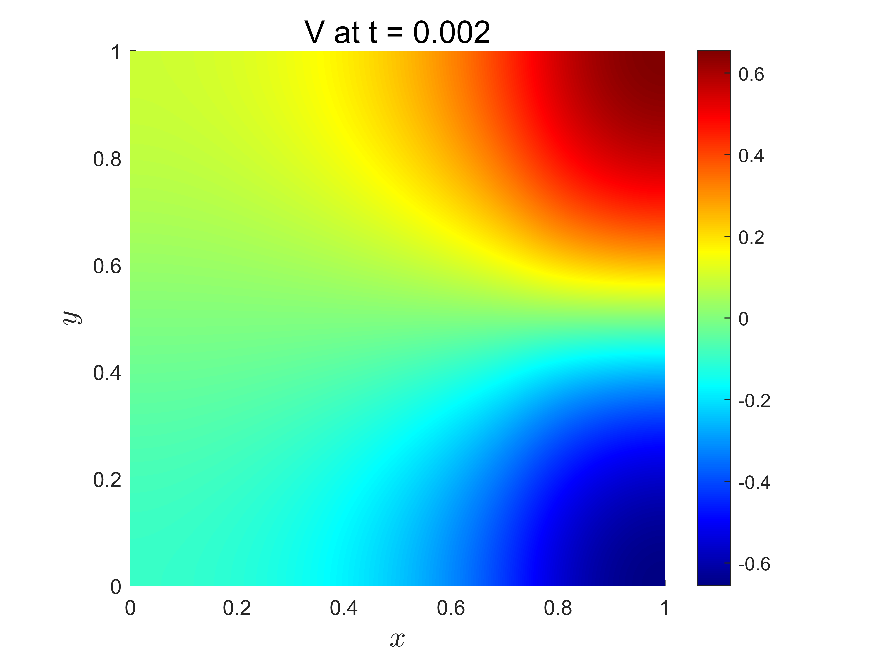}
\end{minipage}
\\
\begin{minipage}[t]{0.29\linewidth}
\includegraphics[width=1.8in, height=1.6 in]{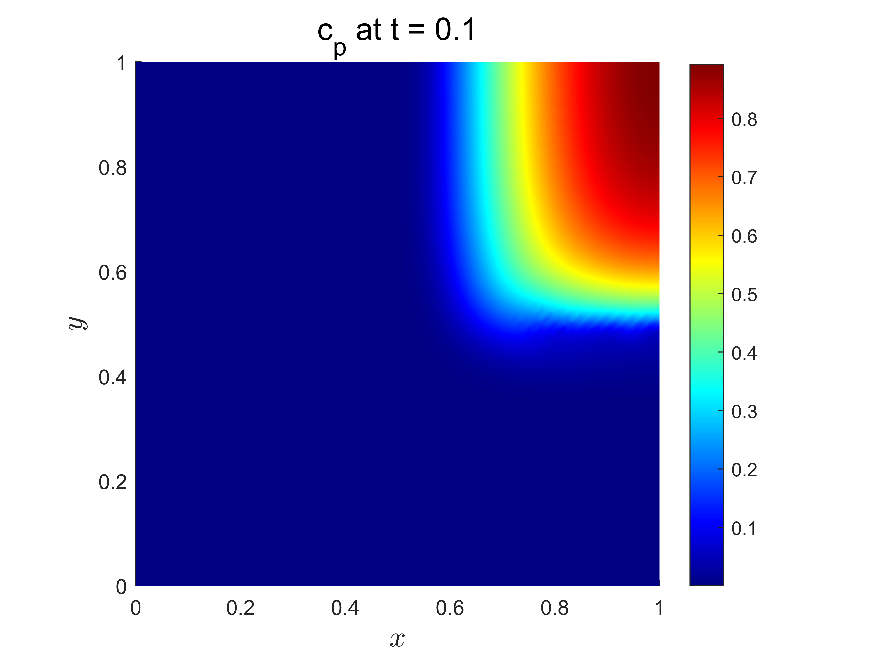}
\end{minipage}
\begin{minipage}[t]{0.29\linewidth}
\includegraphics[width=1.8in, height=1.6 in]{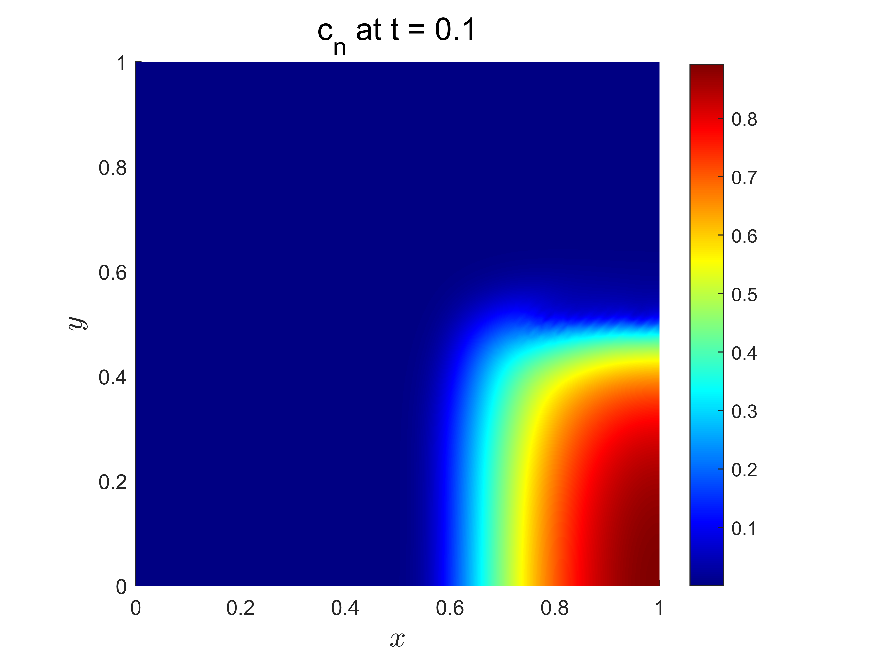}
\end{minipage}
\begin{minipage}[t]{0.29\linewidth}
\includegraphics[width=1.8in, height=1.6 in]{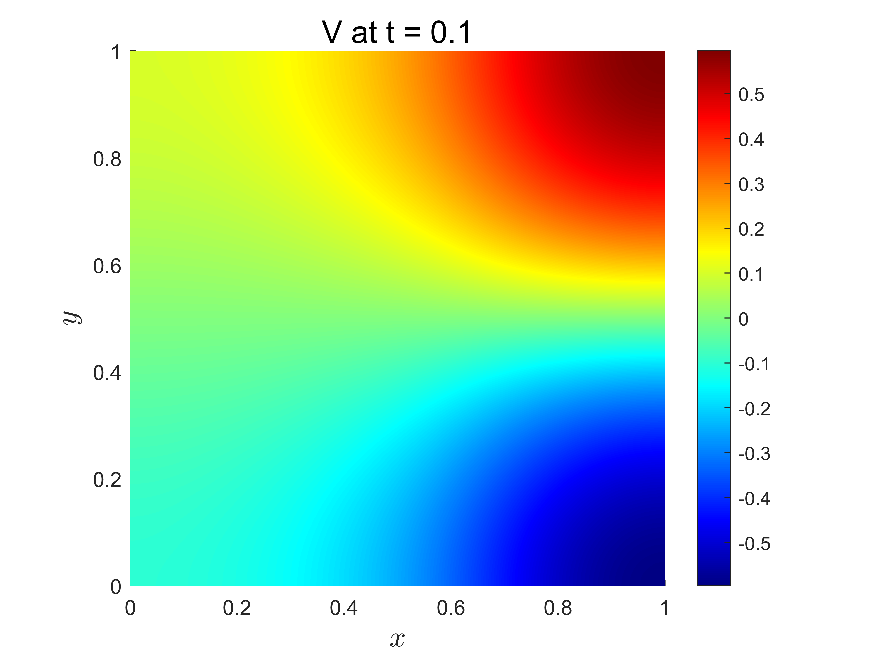}
\end{minipage}
\\
\begin{minipage}[t]{0.29\linewidth}
\includegraphics[width=1.8in, height=1.6 in]{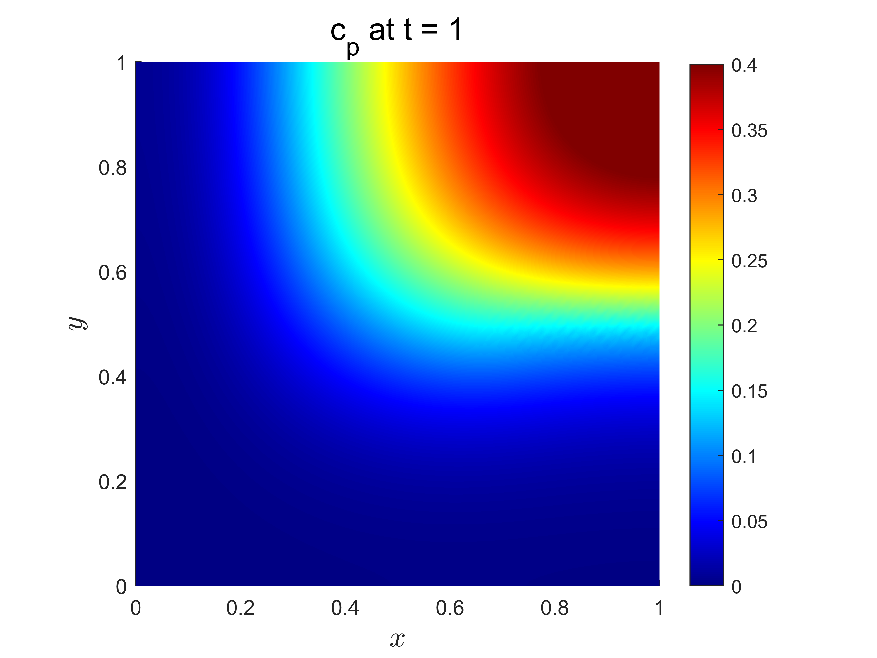}
\end{minipage}
\begin{minipage}[t]{0.29\linewidth}
\includegraphics[width=1.8in, height=1.6 in]{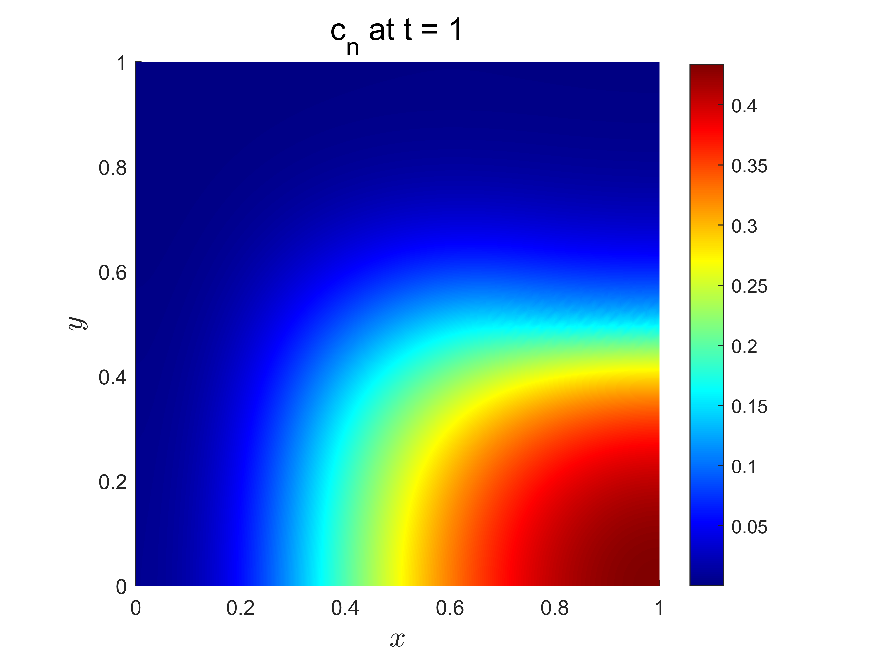}
\end{minipage}
\begin{minipage}[t]{0.29\linewidth}
\includegraphics[width=1.8in, height=1.6 in]{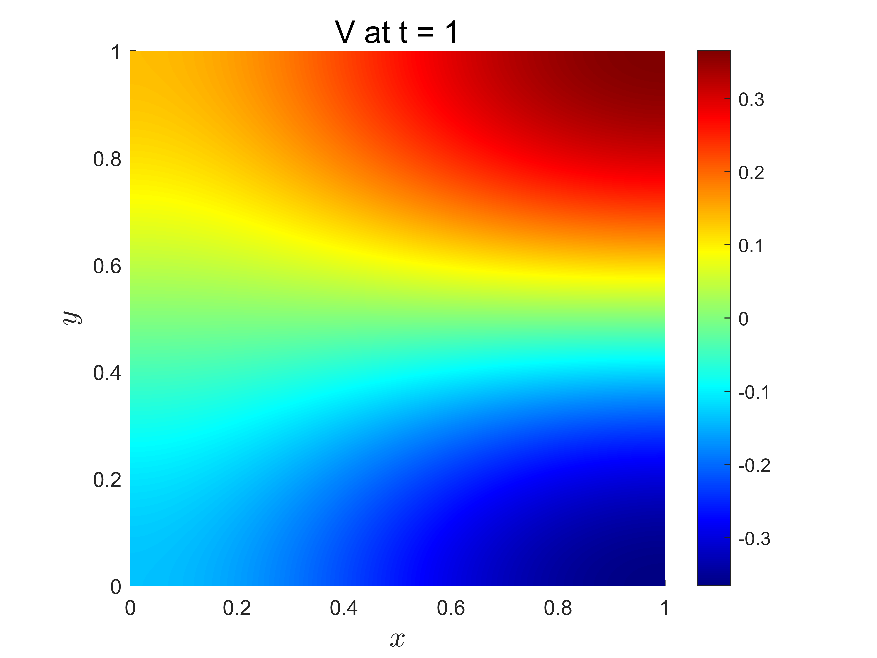}
\end{minipage}
\caption{Snapshots of $c_p$, $c_n$ and $V$ for $\omega_{11}=\omega_{22}=8$, $\omega_{12}=\omega_{21}=7$ at $t = 0.002$, $t = 0.1$ and $t = 1$.\label{StericEx4}}
\end{figure}

\subsection{Effects of exponent $k$}	
In this test, we consider the effects of exponent $k$ on the velocity field. We set the computed domain to be $\Omega=[0,1]^2$. The initial and boundary conditions are
\begin{equation}\label{test3:InitBC}
\begin{split}
& c_{p 0}=1+10^{-6}- \tanh( 100( (x-0.4)^2 + (y-0.4)^2 - (0.05)^2 ) ), \\
& c_{n 0}=1+10^{-6}- \tanh( 100( (x-0.6)^2 + (y-0.6)^2 - (0.05)^2 ) ), \quad \bm{u}_{0}=(0,0), \\
&x=0: \quad   V = 1,\quad\frac{\partial c_p}{\partial \mathbf{n}}=\frac{\partial c_n}{\partial \mathbf{n}}=0, \quad u=0, \quad v=0, \\
&x=1: \quad  V = 0,\quad\frac{\partial c_p}{\partial \mathbf{n}}=\frac{\partial c_n}{\partial \mathbf{n}}=0,\quad u=0, \quad v=0,\\
&y=0, 1 : \quad\frac{\partial V}{\partial \mathbf{n}}=\frac{\partial c_p}{\partial \mathbf{n}}=\frac{\partial c_n}{\partial \mathbf{n}}=0,  \quad u=0, \quad v=0.
\end{split}
\end{equation}
The parameters are set as $\lambda=0.1$, $Pe=50$, $Re=50$, $Co=100$, $\mu_0=1.0$, $\mu_{\infty}=0.1$, $\lambda_1=0.1$, $\Delta t=0.001$ and $h=\frac{\sqrt{2}}{60} $.
The time evolution of the streamlines are given in Fig. \ref{Ex4}.
Across all $k$ values, a single vortex was observed at $t=5$, with its shape distinctly influenced by the shear behavior. For $k<1$, indicative of shear thinning, the vortex core appeared flattened along the flow direction. At $k=1$, representing Newtonian behavior, the vortex core was less flattened. Conversely, for $k>1$, which denotes shear thickening, the vortex core was more rounded and oriented more vertically. Fig. \ref{Ex41} shows the snapshots of $c_p-c_n$ for $t = 0.005,0.075,0.1,0.2,0.3,5$ at $k=0.4$. At the end of the computation, the fluid becomes almost electron-neutral.

\begin{figure}[htb]
\setlength{\abovecaptionskip}{0pt}
\setlength{\abovecaptionskip}{0pt}
\setlength{\belowcaptionskip}{3pt}
\renewcommand*{\figurename}{Fig.}
\centering
\begin{minipage}[t]{0.29\linewidth}
\includegraphics[width=1.8in, height=1.5 in]{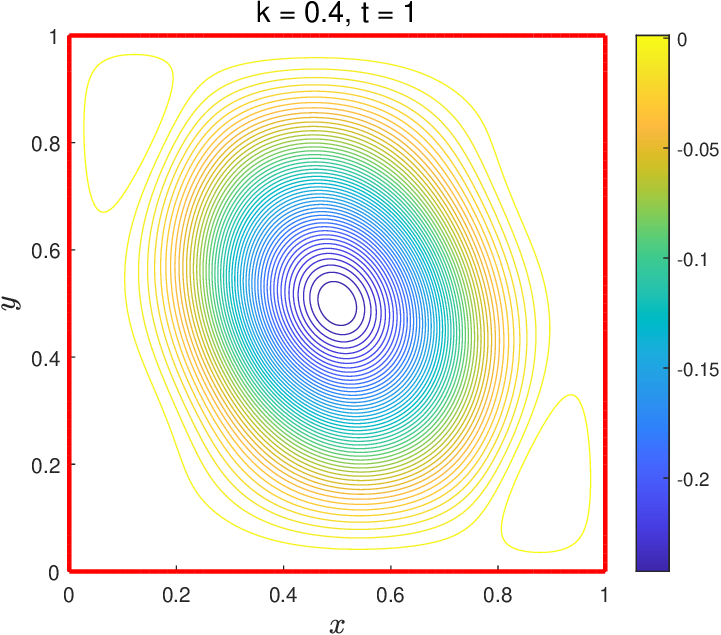}
\end{minipage}
\begin{minipage}[t]{0.29\linewidth}
\includegraphics[width=1.8in, height=1.5 in]{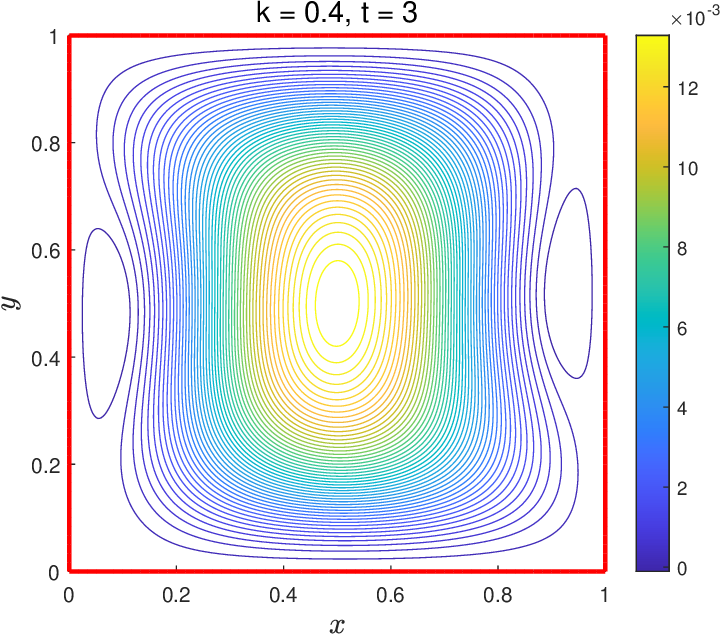}
\end{minipage}
\begin{minipage}[t]{0.29\linewidth}
\includegraphics[width=1.8in, height=1.5 in]{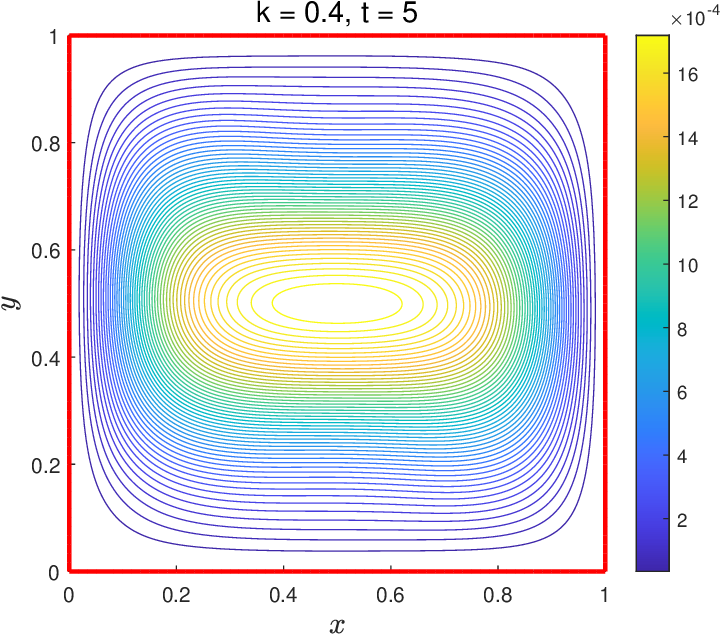}
\end{minipage}
\\
\begin{minipage}[t]{0.29\linewidth}
\includegraphics[width=1.8in, height=1.5 in]{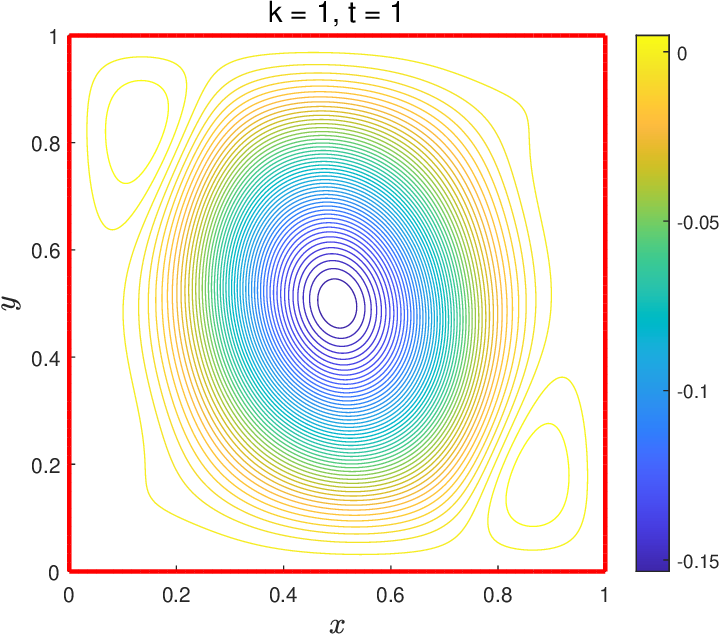}
\end{minipage}
\begin{minipage}[t]{0.29\linewidth}
\includegraphics[width=1.8in, height=1.5 in]{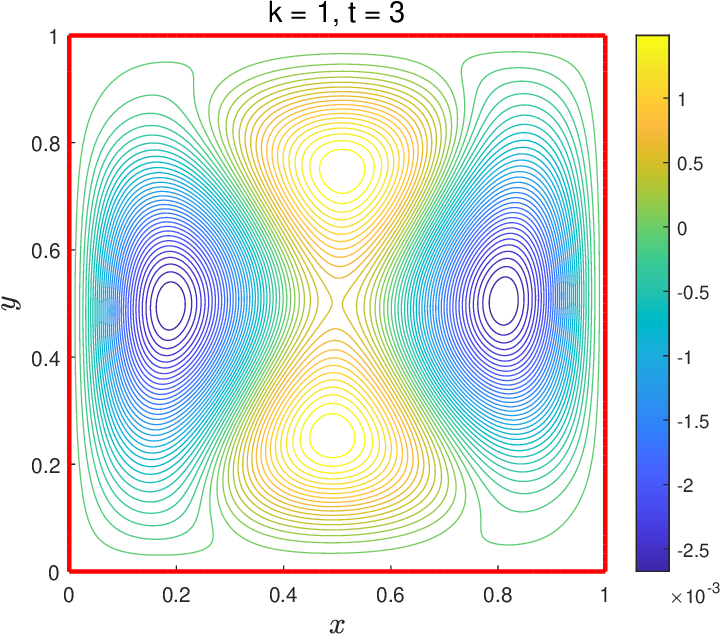}
\end{minipage}
\begin{minipage}[t]{0.29\linewidth}
\includegraphics[width=1.8in, height=1.5 in]{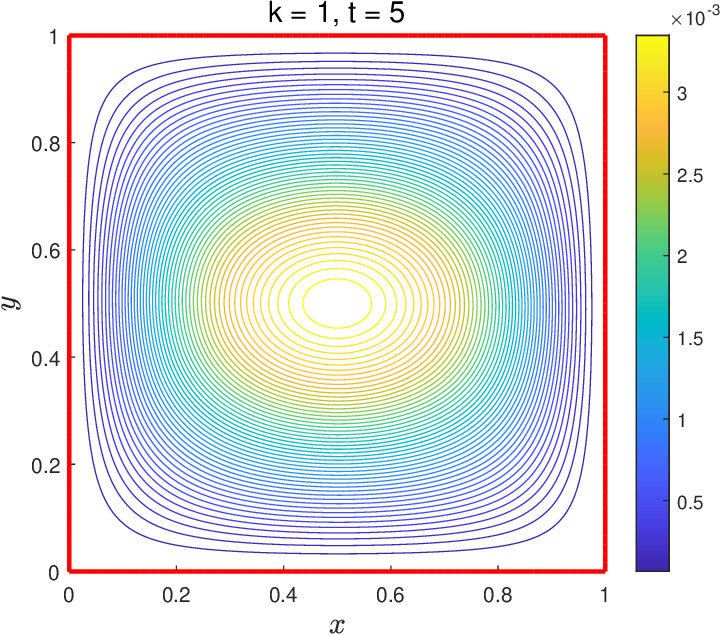}
\end{minipage}
\\
\begin{minipage}[t]{0.29\linewidth}
\includegraphics[width=1.8in, height=1.5 in]{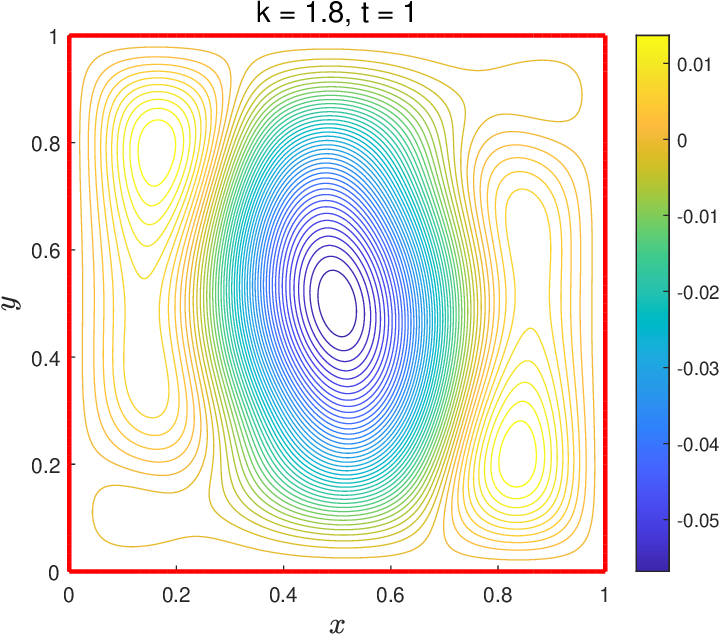}
\end{minipage}
\begin{minipage}[t]{0.29\linewidth}
\includegraphics[width=1.8in, height=1.5 in]{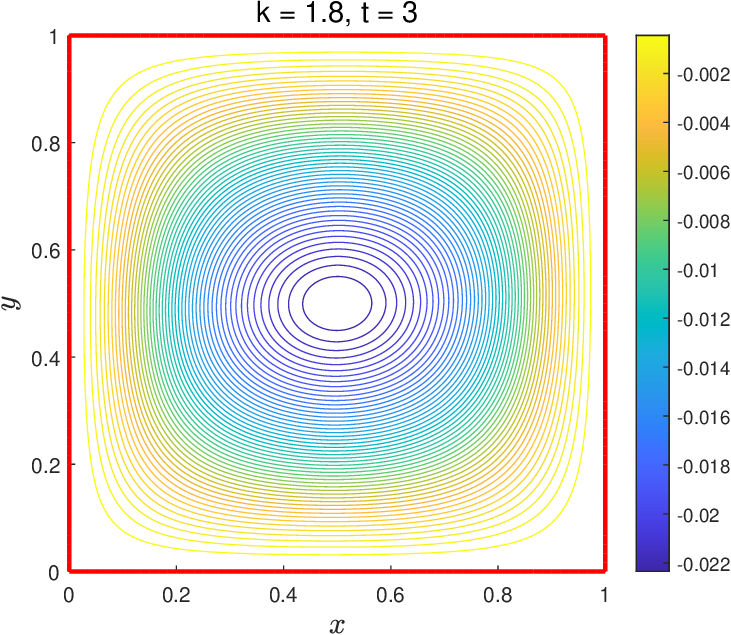}
\end{minipage}
\begin{minipage}[t]{0.29\linewidth}
\includegraphics[width=1.8in, height=1.5 in]{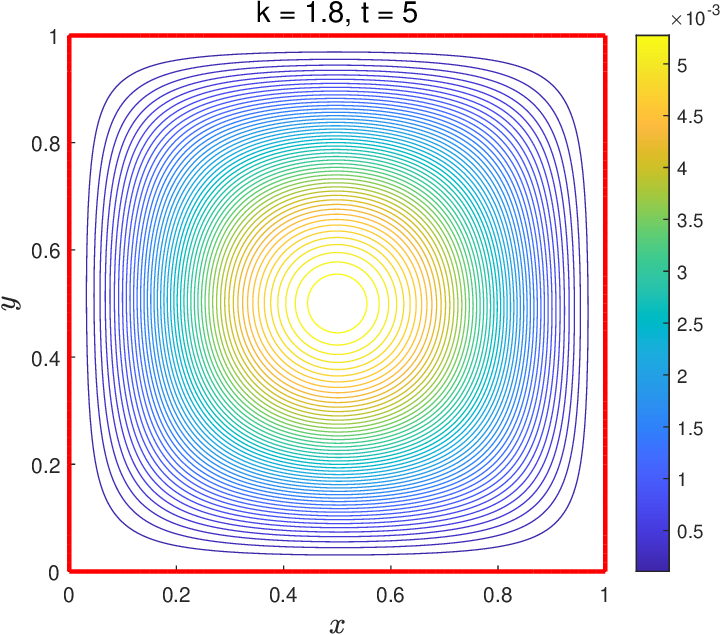}
\end{minipage}
\caption{Time evolution of streamlines for $k = 0.4$, $k = 1$ and $k = 1.8$.\label{Ex4}}
\end{figure}

\begin{figure}[htb]
\setlength{\abovecaptionskip}{0pt}
\setlength{\abovecaptionskip}{0pt}
\setlength{\belowcaptionskip}{3pt}
\renewcommand*{\figurename}{Fig.}
\centering
\begin{minipage}[t]{0.29\linewidth}
\includegraphics[width=1.8in, height=1.6 in]{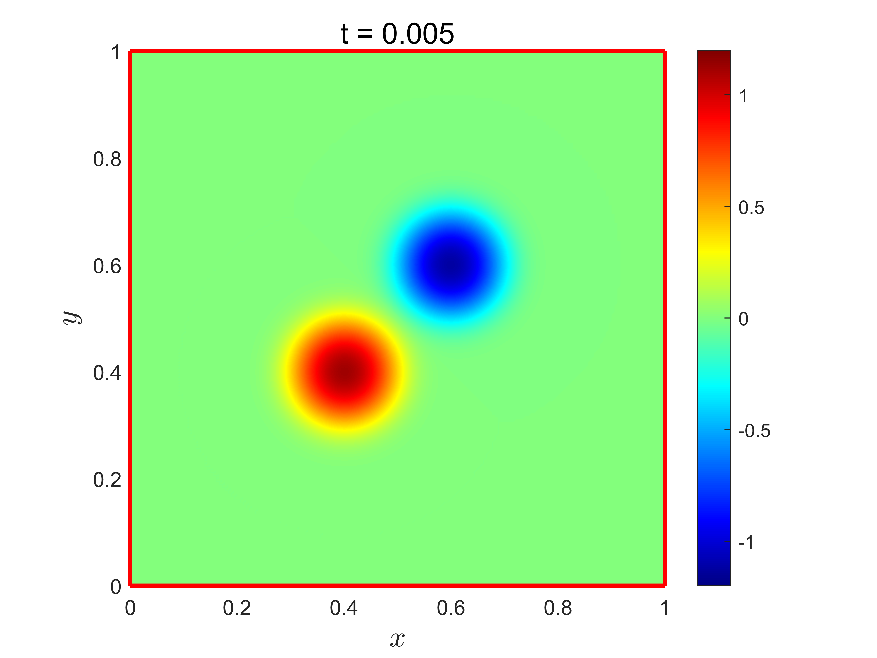}
\end{minipage}
\begin{minipage}[t]{0.29\linewidth}
\includegraphics[width=1.8in, height=1.6 in]{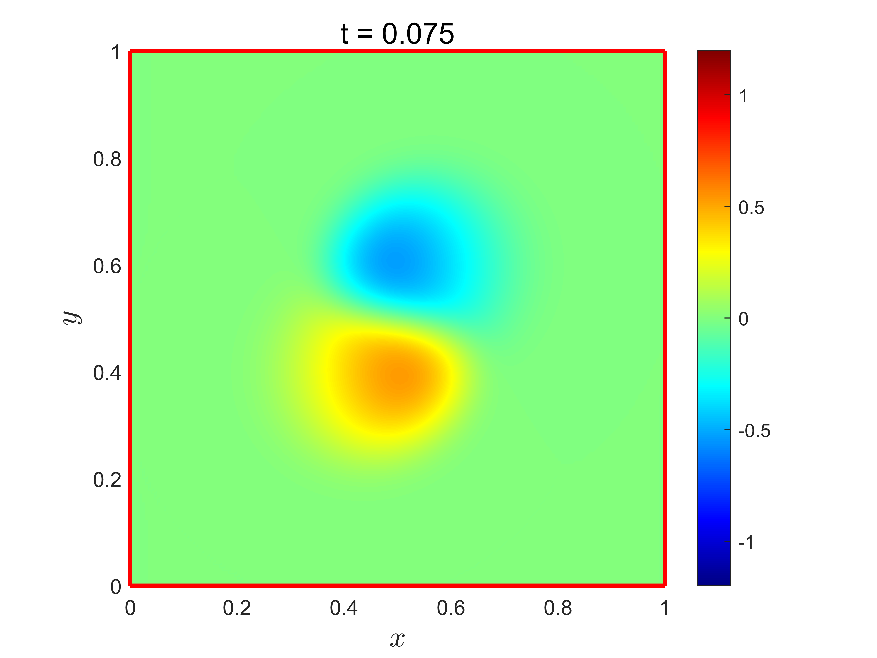}
\end{minipage}
\begin{minipage}[t]{0.29\linewidth}
\includegraphics[width=1.8in, height=1.6 in]{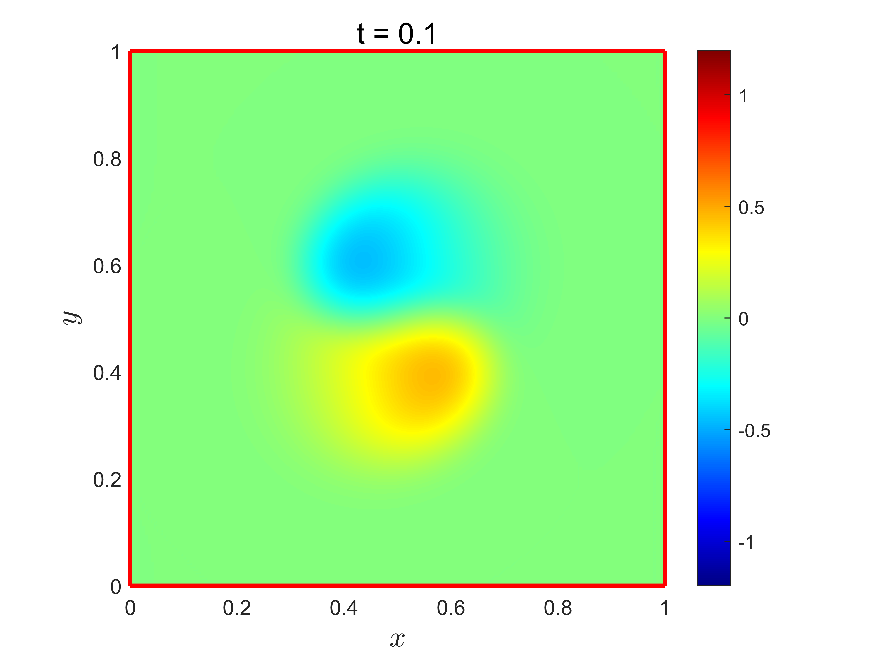}
\end{minipage}
\\
\begin{minipage}[t]{0.29\linewidth}
\includegraphics[width=1.8in, height=1.6 in]{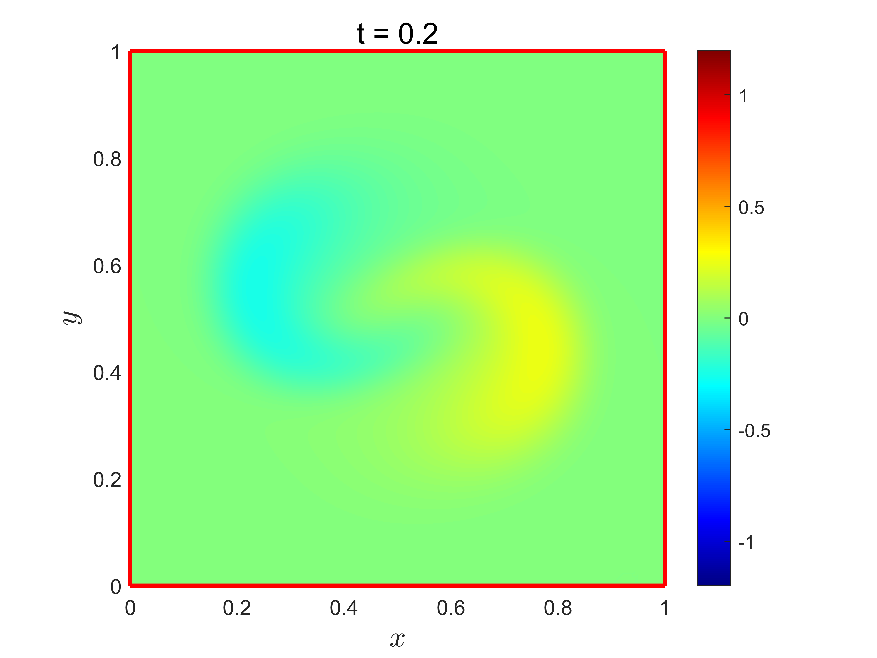}
\end{minipage}
\begin{minipage}[t]{0.29\linewidth}
\includegraphics[width=1.8in, height=1.6 in]{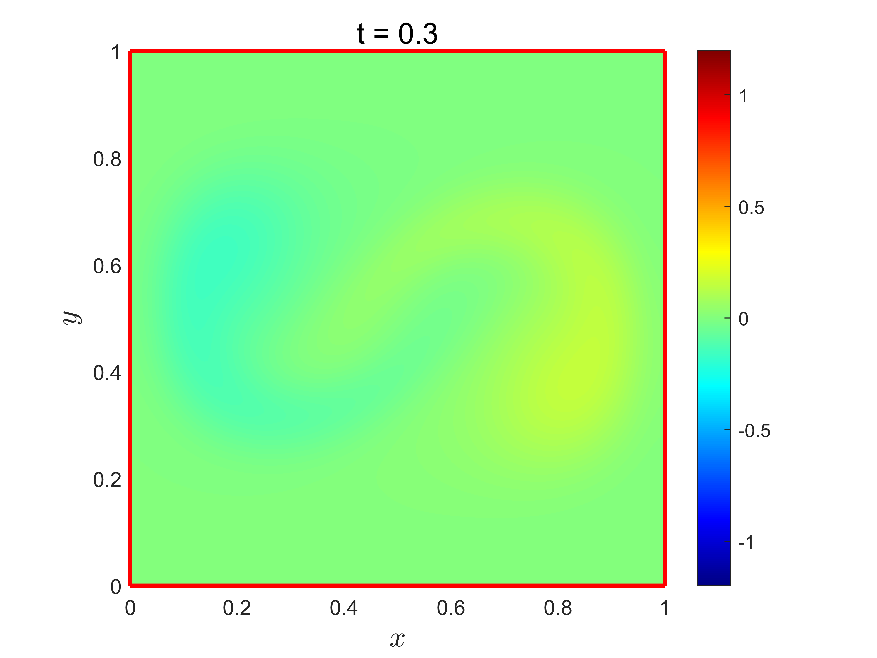}
\end{minipage}
\begin{minipage}[t]{0.29\linewidth}
\includegraphics[width=1.8in, height=1.6 in]{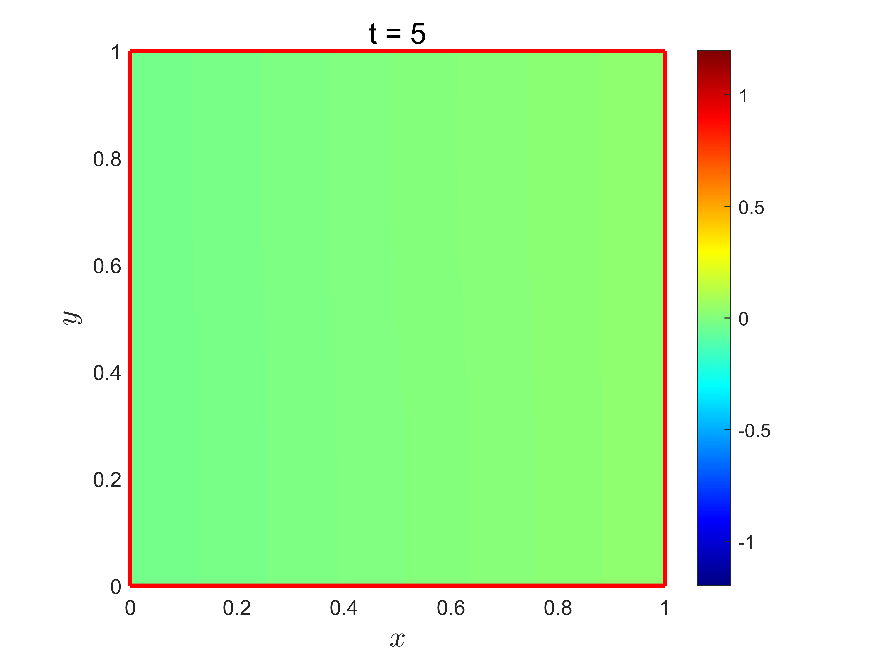}
\end{minipage}
\caption{Snapshots of $c_p-c_n$ for $t = 0.005,0.075,0.1,0.2,0.3,5$ at $k=0.4$.\label{Ex41}}
\end{figure}

\section{Conclusions}
In this paper, to explore the impact of steric effect on ion transport in non-Newtonian fluid, we have developed a linear, decoupled, second-order accurate in time and structure-preserving scheme for Carreau fluid equations coupled with steric PNP model. The novelty of the proposed scheme is based on the decoupling technique by introducing a nonlocal auxiliary variable with respect to the free energy of steric PNP equations and splitting method for the fluid equation. Moreover, at each time step, we only need to solve several linear equations. The fully discrete numerical scheme has proved to be energy stable, and preserve positivity and mass conservation of the ion concentration. The accuracy and effectiveness of the scheme are verified by numerical experiments.

\bibliographystyle{unsrt}
\bibliography{reference}
	
\end{document}